\renewcommand{\,}{\kern1pt}
\newcommand{\sq}{\hbox{\small$\sqrt3$\,}}
\newcommand{\aqH}{AQH}
\newcommand{\bbb}{\beta}
\newcommand{\aaa}{\alpha}
\newcommand{\sss}{\sigma}
\newcommand{\pp}{\mathfrak{p}}
\renewcommand{\ss}{\mathfrak{s}}
\newcommand{\tr}{\mathop{\mathrm{tr}}}
\newcommand{\suml}{\textstyle\sum\limits}
\numberwithin{equation}{section}
\theoremstyle{plain}
\newtheorem{theorem}{Theorem}[section]
\newtheorem{lemma}[theorem]{Lemma}
\newtheorem{proposition}[theorem]{Proposition}
\newtheorem{corollary}[theorem]{Corollary}
\newcommand{\wt}[1]{\widetilde{#1}}
\newcommand\SU{{\rm SU}}
\newcommand\Sp{{\rm Sp}}
\newcommand\SO{{\rm SO}}
\newcommand{\w}{\omega}
\begin{document}
\title[Nearly quaternionic structure on $\SU(3)$]
{A nearly quaternionic structure on $\SU(3)$}
\author[\'O.\ Maci\'a]{\'Oscar Maci\'a}

%\date{July 30, 2009}
\address{Dipartimento di Matematica
\\ Politecnico di Torino \\ Corso Duca degli Abruzzi 24, 10129 Torino,
Italy} \email{oscarmacia@calvino.polito.it}
\subjclass[2000]{Primary 53C15 ; Secondary 53C26} \keywords{Almost
quaternionic Hermitian, $G$-structure, intrinsic torsion}
%\thanks{}
\begin{abstract}
It is shown that the compact Lie group $\SU(3)$ admits an
$\Sp(2)\Sp(1)$ structure whose distinguished 2-forms
$\w_1,\w_2,\w_3$ span a differential ideal. This is achieved by
first reducing the structure further to a subgroup isomorphic to
$\SO(3)$.
\end{abstract}
\maketitle
%%%%%%%%%%%%%%%%%%%%%%%%%%%%%%%%%%%%%%%%%%%%%%%%%%%% SIMBOLI MATEMATICI %%%%%%%%%%%%%%%%%%%%%%%%%%%%%%%%%%%%%%%%%%%%%%%%%%

\newcommand\C{{\mathbb C}}
\newcommand\R{{\mathbb R}}

\newcommand\GL{{\rm GL}}
\newcommand\SL{{\rm SL}}
\newcommand\su{\mathfrak{su}}
\newcommand\so{\mathfrak{so}}
\newcommand\fsp{\mathfrak{sp}}
\newcommand\cI{\mathscr{I}}
\newcommand\cW{\mathcal{W}}

%%%%%%%%%%%%%%%%%%%%%%%%%%%%%%%%%%%%%%%%%%%%%%%%%%
%%%%%%%%%%%%%%%%%%%%%%%%%%%%%%%%%%%%%%%%%%%%%%%%%%
%
%        SECTION 1
%
%%%%%%%%%%%%%%%%%%%%%%%%%%%%%%%%%%%%%%%%%%%%%%%%%%
%%%%%%%%%%%%%%%%%%%%%%%%%%%%%%%%%%%%%%%%%%%%%%%%%%

\parskip2pt

\section{introduction}

An almost quaternionic Hermitian (\aqH) manifold is a Riemannian
$4n$-manifold $\{M,g\}$ admitting a $\Sp(n)\Sp(1)$-structure, that is a
reduction of its frame bundle to a subbundle whose structure group is
the subgroup $\Sp(n)\Sp(1)$ of $\SO(4n).$ This means that $\{M,g\}$ is
equipped locally with a triple of almost complex structures
$\{I_1,I_2,I_3\}$ that behave like the imaginary quaternions $i,j,k$,
and are compatible with the metric. The almost complex structures
$I_i$ generate a subbundle $\cI$ of endomorphisms of $TM$.

Following the method initiated by Gray \& Hervella for the study of
almost Hermitian manifolds \cite{Gray78}, the space
\[
\R^{4n}\otimes(\fsp(n)\oplus\fsp(1))^\perp
\]
of intrinsic torsion tensors decomposes into irreducible modules under
the action of $\Sp(n)\Sp(1)$, giving rise to a natural classification
of \aqH\ manifolds. The intrinsic torsion can be identified with the
Levi-Civita derivative $\nabla\Omega$, where
\[\Omega=\suml_{j=1}^3\omega_j\wedge\omega_j,\]
is the fundamental 4-form, defined locally in terms of the 2-forms
given by $\omega_i(X,Y)=g(I_iX,Y)$. Conditions describing the
intrinsic torsion classes can be studied accordingly. In the general
case, for $n>2,$ there exist six irreducible components of intrinsic
torsion. But for $n=2$, only four components arise, giving $2^4=16$
classes of \aqH\ $8$-manifolds, with a closer analogy to the almost
Hermitian complex case.

If the intrinsic torsion vanishes the \aqH\ manifold is said to be
quaternionic K\"ahler, the holonomy reduces to $\Sp(n)\Sp(1)$ and the
manifold is Einstein. In 1989, Swann \cite{Swann89} proved
\begin{theorem}\label{qK}
Let $\{M,g,\cI\}$ be an \aqH\ $4n$-manifold, $n>2,$ with
fundamental 4-form $\Omega.$ Then, it is quaternionic K\"ahler if
and only if $\Omega$ is closed.\\ For $n=2$, $\{M,g,\cI\}$ is
quaternionic K\"ahler if and only if
\begin{enumerate}
\item The  fundamental 4-form is closed, $d\Omega=0.$
\item The set $\{\w_1,\w_2,\w_3\}$ of $2$-forms generates a
  differential ideal.
\end{enumerate}
\end{theorem}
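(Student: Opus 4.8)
The plan is to read the quaternionic Kähler condition as the vanishing of the intrinsic torsion and then to decide, summand by summand, how much of that torsion is detected by $d\Omega$. First I would recall that $\{M,g,\cI\}$ is quaternionic Kähler exactly when its holonomy lies in $\Sp(n)\Sp(1)$, i.e.\ when the intrinsic torsion $\tau$, a section of $\R^{4n}\otimes(\fsp(n)\oplus\fsp(1))^\perp$, vanishes identically. Writing the complexified tangent space as $E\otimes H$ with $E$ the standard $\Sp(n)$-module and $H$ the standard $\Sp(1)$-module, one computes $(\fsp(n)\oplus\fsp(1))^\perp\cong\Lambda^2_0 E\otimes S^2 H$, so that $\tau$ lives in $(E\otimes H)\otimes(\Lambda^2_0 E\otimes S^2 H)$. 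Decomposing with $H\otimes S^2 H\cong S^3 H\oplus H$ and $E\otimes\Lambda^2_0 E\cong V_{(2,1)}\oplus\Lambda^3_0 E\oplus E$ yields six irreducible $\Sp(n)\Sp(1)$-summands; since $\Lambda^3_0 E=0$ precisely when $n=2$, the two summands built on $\Lambda^3_0 E$ disappear there, leaving the four components announced in the introduction.

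Next I would relate this to $d\Omega$. Because $\Omega$ is $\Sp(n)\Sp(1)$-invariant, $\nabla\Omega$ is a linear equivariant function of $\tau$, and $d\Omega=\mathrm{Alt}\,\nabla\Omega$ defines an equivariant map $L\colon\R^{4n}\otimes(\fsp(n)\oplus\fsp(1))^\perp\to\Lambda^5(\R^{4n})^*$. By Schur's lemma $L$ acts on each irreducible summand either as $0$ or as an isomorphism onto a matching summand of $\Lambda^5$, so the whole theorem reduces to locating $\ker L$. For $n>2$ I would verify, by decomposing $\Lambda^5(\R^{4n})^*$ and evaluating $L$ on highest-weight vectors, that $L$ is nonzero on all six summands, hence injective; then $d\Omega=0$ forces $\tau=0$, and the converse is immediate since reduced holonomy gives $\nabla\Omega=0$ and a fortiori $d\Omega=0$. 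This settles the case $n>2$.

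The case $n=2$ is the crux. Here $\Omega$ is self-dual, $*\Omega=c\,\Omega$, so that $d\Omega=0$ is equivalent to $\delta\Omega=0$: the coclosed information carried by $\nabla\Omega$ is redundant with the closed information, and consequently $L$ now kills exactly one of the four summands, say $\cW$. I would pin down $\cW$ explicitly and show that $d\Omega=0$ forces the other three summands of $\tau$ to vanish while leaving $\cW$ undetermined. It then remains to interpret the differential-ideal condition: writing each $d\w_i$ as its part in $T^*M\wedge\cI$ plus a complementary part, I expect the complementary part to be precisely the equivariant projection of $\tau$ onto $\cW$, so that ``$\{\w_1,\w_2,\w_3\}$ generates a differential ideal'' becomes equivalent to $\cW=0$. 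Combining, for $n=2$ the two stated conditions together say that all four summands of $\tau$ vanish, i.e.\ $\tau=0$; the converse again follows from the parallel quaternionic structure equations $d\w_i=\suml_j\lambda_{ij}\wedge\w_j$, which hold in the quaternionic Kähler case and yield both $d\Omega=0$ and the ideal property.

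The main obstacle is the explicit $\Sp(2)\Sp(1)$-representation bookkeeping underlying the last paragraph: identifying which summand $\cW$ falls in $\ker L$, and proving rigorously that the ideal condition is \emph{equivalent} to its vanishing rather than merely implied by it. This requires a careful analysis of the two maps $\tau\mapsto d\Omega$ and $\w_i\mapsto d\w_i\ (\mathrm{mod}\ T^*M\wedge\cI)$ on explicit highest-weight vectors, where the self-duality of $\Omega$ in dimension eight must be invoked to see that $d\Omega$ alone cannot capture $\cW$.
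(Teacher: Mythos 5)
Your overall framework---identifying the quaternionic K\"ahler condition with the vanishing of the intrinsic torsion $\tau$, decomposing the torsion space $(E\otimes H)\otimes\Lambda^2_0E\otimes S^2H$ into irreducibles (six summands for $n>2$, four for $n=2$ because $\Lambda^3_0E=0$), and then testing the equivariant maps $\tau\mapsto d\Omega$ and $\tau\mapsto(d\w_i\bmod T^*M\wedge\mathscr{I})$ summand by summand via Schur's lemma---is the right one, and it is essentially the strategy behind the result. Note, however, that the paper contains no proof of Theorem~\ref{qK} at all: it is quoted from Swann \cite{Swann89}, and the only internal material to compare against consists of the corollaries in Section~2, which record exactly which torsion summands each of the two conditions kills.

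That comparison exposes a genuine error at the step you yourself flag as the crux. You claim that the obstruction to the differential-ideal condition is the projection of $\tau$ onto the \emph{single} summand $\mathcal{W}$ missed by $L\colon\tau\mapsto d\Omega$, so that the ideal condition would be \emph{equivalent} to the vanishing of that one component. This is false. The summand killed by $L$ is $\mathcal{W}_2=K\,S^3H$ (the paper's closedness criterion: $d\Omega=0$ if and only if $\xi\in KS^3H$), whereas by Corollary~\ref{idealclass} the ideal condition is equivalent to $\xi\in ES^3H\oplus EH=\mathcal{W}_1\oplus\mathcal{W}_4$; its obstruction map is injective on $\mathcal{W}_2\oplus\mathcal{W}_3$, not on $\mathcal{W}_2$ alone. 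The two conditions are thus not complementary in your clean sense---they overlap on $\mathcal{W}_3=KH$. Concretely, your claimed equivalence would imply that every quaternionic 8-manifold (torsion in $KH\oplus EH$) has 2-forms spanning a differential ideal, which is not the case: the $\mathcal{W}_3$ structures on $\mathrm{SL}(3,\mathbb{R})$ mentioned at the end of the paper satisfy only the weaker condition $d\Omega\wedge\w_i=0$. The theorem itself still follows from your skeleton, because the backward direction only needs the true implication ``ideal condition $\Rightarrow$ the $KS^3H$-component of $\tau$ vanishes'': then $d\Omega=0$ forces $\xi\in\mathcal{W}_2$, the ideal condition annihilates that component, and $\tau=0$; the forward direction is as you say. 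But the highest-weight computation you propose must be aimed at the correct kernel $\mathcal{W}_1\oplus\mathcal{W}_4$, since the equivalence you set out to prove rigorously is not true and any attempt to establish it would fail on the $\mathcal{W}_3$ summand.
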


\noindent Condition (2) means that there exists a $3\times3$ matrix
$(\bbb_i^j)$ of 1-forms such that
\begin{equation}\label{ideal}
d\omega_i=\suml_{j=1}^3\bbb_i^j\wedge\omega_j,
\end{equation}
The condition itself is easily seen to be dependent only on $\cI$ and
not the choice of basis (see Section~3).\smallbreak

In the case $n=2$, this result left open the existence question for
manifolds satisfying one of the conditions but not the other. One
affirmative answer was provided in 2001 by Salamon \cite{Salamon00}
with an example of an \aqH\ 8-manifold with closed, but non-parallel,
fundamental 4-form. This `almost parallel' manifold is a product of a
3-torus with a 5-dimensional nilmanifold, and similar examples were
found by Giovannini \cite{Giovannini}.

In the present paper we deal with the complementary case, namely
$8$-manifolds with $\Sp(2)\Sp(1)$-structure for which the $\cI$
generates a differential ideal, but for which the 4-form $\Omega$ is
not closed.

The paper is organized as follows. In Section~2, we discuss the four
components of intrinsic torsion and their relationship with the
differential ideal condition. Other properties of this condition are
discussed in Section~3, which enables us to re-formulate the
quaternionic K\"ahler condition. A one-parameter family of
quaternionic structures is defined in Section~4 on $\SU(3)$, endowed
with a compatible deformation $g_\lambda$ of its bi-invariant metric.
In Section~5, it is shown that for specific choices of the parameter
$\lambda,$ $\{\SU(3),g_\lambda,\cI\}$ is \aqH\ and satisfies condition
(2), but not (1) in Theorem~\ref{qK}.

Some of the computations in Sections 4 and 5 were effectively carried
out using \emph{Mathematica} and the differential forms package
\emph{scalarEDC} \cite{Bonanos}.

%%%%%%%%%%%%%%%%%%%%%%%%%%%%%%%%%%%%%%%%%%%%%%%%%%%%%%%%%%%%%%%%
%%%%%%%%%%%%%%%%%%%%%%%%%%%%%%%%%%%%%%%%%%%%%%%%%%%%%%%%%%%%%%%%
%
%        SECTION 2
%
%%%%%%%%%%%%%%%%%%%%%%%%%%%%%%%%%%%%%%%%%%%%%%%%%%%%%%%%%%%%%%%%
%%%%%%%%%%%%%%%%%%%%%%%%%%%%%%%%%%%%%%%%%%%%%%%%%%%%%%%%%%%%%%%%

\section{Intrinsic torsion and reduction to $\SO(3)$}

We will use the $E$-$H$ formalism described in \cite{Salamon82}.
Suppose that $n\geq 2$. Let $E$ (respectively, $H$) denote the
basic complex representation of $\Sp(n)$ (respectively, $\Sp(1)$),
with highest weight $(1,0,\dots,0)$ (resp.  $(1)$), such that
$E\simeq \C^{2n}$ (resp. $H\simeq \C^2)$. We denote the
$\Sp(n)$-module with highest weight $(1,\dots,1,0,\dots,0),$ (with
$r$ $1$'s and $n-r$ $0$'s) by $\Lambda^r_0E.$ Also, $S^r E$
(respectively, $S^r H$) will denote the $\Sp(n)$-module
(respectively, $\Sp(1)$) with highest weight $(r,0,\dots,0)$
(respectively, $(r)$). Finally, let $K$ be the $\Sp(n)$-module
with highest weight $(2,1,0,\dots,0).$

The fundamental 4-form of the $\Sp(n)\Sp(1)$-structure is the
distinguished element arising the decomposition of $\Lambda^4T^*$
under the action of $\Sp(n)\Sp(1),$ where
\begin{equation}\label{TEH}
T^*\otimes_\R\C=E\otimes H
\end{equation}
represents the complexified cotangent space. The intrinsic torsion
$\xi=\nabla\Omega$ of an $\Sp(n)\Sp(1)$-structure is described by the
following

\begin{theorem} \emph{(}Swann, \cite{Swann89}\emph{)} The intrinsic torsion of an \aqH\
$4n$-manifold, $n\ge2$ can be identified with an element in the
space
$$\left(\Lambda^3_0E\oplus K\oplus E\right)\otimes\left(H \oplus
  S^3H\right).$$ For $n=2,$ the intrinsic torsion belongs
  to \begin{equation}\label{modules}
E\,S^3H\oplus K\,S^3H\oplus KH\oplus EH.\end{equation}
\end{theorem}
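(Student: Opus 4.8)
The plan is to work entirely at the level of complexified $\Sp(n)\Sp(1)$-representations and to decompose the ambient torsion space, which by the introduction is $T^*\otimes(\fsp(n)\oplus\fsp(1))^\perp$; this is legitimate because the intrinsic torsion of a $G$-structure is a well-defined element of $T^*\otimes\mathfrak{g}^\perp$, the component along $\mathfrak{g}=\fsp(n)\oplus\fsp(1)$ being absorbed by the freedom in the choice of $\Sp(n)\Sp(1)$-connection. First I would identify the complement module. Complexifying, $\so(4n)=\Lambda^2T^*$ becomes $\Lambda^2(E\otimes H)$, and the standard plethysm gives
\[
\Lambda^2(E\otimes H)=\bigl(\Lambda^2E\otimes S^2H\bigr)\oplus\bigl(S^2E\otimes\Lambda^2H\bigr).
\]
The invariant symplectic forms on $E$ and $H$ split off trivial summands, $\Lambda^2H\simeq\C$ and $\Lambda^2E\simeq\C\oplus\Lambda^2_0E$, while $S^2E\simeq\fsp(n)\otimes\C$ and $S^2H\simeq\fsp(1)\otimes\C$ are the two adjoint factors. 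Removing these two Lie-algebra summands leaves
\[
(\fsp(n)\oplus\fsp(1))^\perp\otimes\C\simeq\Lambda^2_0E\otimes S^2H.
\]

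Next I would assemble the torsion space. Using \eqref{TEH}, its complexification is
\[
(E\otimes H)\otimes(\Lambda^2_0E\otimes S^2H)\simeq(E\otimes\Lambda^2_0E)\otimes(H\otimes S^2H),
\]
so the problem factors into one $\Sp(1)$- and one $\Sp(n)$-decomposition. The $\Sp(1)$ factor is the elementary Clebsch--Gordan rule $H\otimes S^2H\simeq S^3H\oplus H$.

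The substantive step is the $\Sp(n)$ tensor product $E\otimes\Lambda^2_0E$, and this is where I expect the main obstacle: unlike the $\GL$ case, decomposing products for $\Sp(n)$ requires the symplectic modification rules, since contraction against the invariant symplectic form produces lower pieces in addition to the Cartan component. The highest weight $(2,1,0,\dots,0)$ yields the module $K$; a contraction of the $\Lambda^2_0E$ factor against $E$ produces a copy of $E$; and the remaining skew piece gives $\Lambda^3_0E$, so that
\[
E\otimes\Lambda^2_0E\simeq\Lambda^3_0E\oplus K\oplus E.
\]
I would verify this by a direct weight-multiplicity analysis, or more cheaply by a dimension count through the Weyl formula. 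Combining the two factors yields the general claim $(\Lambda^3_0E\oplus K\oplus E)\otimes(H\oplus S^3H)$.

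Finally, for $n=2$ I would record the collapse $\Lambda^3_0E=0$: since $\Sp(2)$ has rank $2$ the weight $(1,1,1,0)$ is not dominant, and concretely contraction with the symplectic form identifies $\Lambda^3E\simeq E$ with no primitive part. Hence the $\Sp(2)$ factor reduces to $K\oplus E$, and expanding
\[
(K\oplus E)\otimes(S^3H\oplus H)\simeq E\,S^3H\oplus K\,S^3H\oplus KH\oplus EH
\]
reproduces \eqref{modules}. A short remark on reality---each listed module is the complexification of a genuine real $\Sp(n)\Sp(1)$-module, since $E$ and $H$ are of quaternionic type and their relevant products are of real type---then completes the identification of the real intrinsic torsion with an element of these spaces.
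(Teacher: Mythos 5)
Your proposal is correct. The paper itself contains no proof of this statement---it is quoted with attribution from Swann's note \cite{Swann89}---so there is nothing internal to compare against; your derivation (identify $(\fsp(n)\oplus\fsp(1))^\perp\otimes\C$ with $\Lambda^2_0E\otimes S^2H$ inside $\Lambda^2(E\otimes H)$, factor the torsion space as $(E\otimes\Lambda^2_0E)\otimes(H\otimes S^2H)$, apply Clebsch--Gordan together with the symplectic decomposition $E\otimes\Lambda^2_0E\simeq\Lambda^3_0E\oplus K\oplus E$, and observe that $\Lambda^3_0E=0$ for $n=2$, plus the closing reality check) is exactly the standard $E$-$H$ argument underlying the cited result.
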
\medskip

Various examples of \aqH\ 8-manifolds with different types of
intrinsic torsion are known (see for example Cabrera \& Swann
\cite{Cabrera2007}):
\begin{corollary} An \aqH\ 8-manifold $M$ is quaternionic if and only if
$\xi\in KH\oplus EH$.
\end{corollary}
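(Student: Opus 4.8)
The plan is to read ``quaternionic'' in its structural sense: the almost quaternionic structure $\cI$, with structure group $\Sp(1)\GL(2,\mathbb{H})$ and Lie algebra $\mathfrak g'=\fsp(1)\oplus\mathfrak{gl}(2,\mathbb{H})$, admits a torsion-free compatible connection. Here $\mathfrak g'\cap\so(8)=\fsp(2)\oplus\fsp(1)$, so writing the Levi-Civita connection as $\nabla=\nabla^0+\xi$, with $\nabla^0$ the canonical $(\fsp(2)\oplus\fsp(1))$-connection and $\xi\in T^*\otimes\mathfrak m$ the intrinsic torsion ($\mathfrak m=\so(8)\ominus(\fsp(2)\oplus\fsp(1))$), any $\mathfrak g'$-connection has the form $D=\nabla^0+B$ with $B\in T^*\otimes\mathfrak g'$. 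Since $\nabla$ is torsion-free, the torsion of $D$ is $\partial B-\partial\xi$, where $\partial\colon T^*\otimes\operatorname{End}(T)\to\Lambda^2T^*\otimes T$ is the alternation (Spencer) operator. Thus a torsion-free compatible $D$ exists if and only if $\partial\xi\in\operatorname{im}\partial$, i.e.\ $M$ is quaternionic precisely when the class $[\partial\xi]$ vanishes in the quaternionic intrinsic-torsion module $\operatorname{coker}\partial$.

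This reduces the statement to an assertion about the $\Sp(2)\Sp(1)$-equivariant map $\Theta\colon\xi\mapsto[\partial\xi]$ from the four-module space of \eqref{modules} to $\operatorname{coker}\partial$: I must show $\ker\Theta=KH\oplus EH$. The two ingredients are the $S^3H$-content of $\operatorname{coker}\partial$ and the absorption of the $H$-type torsion. Working in the $E$-$H$ formalism and using $\Lambda^2T^*\otimes_\R\C=\Lambda^2_0E\,S^2H\oplus S^2H\oplus S^2E$ together with $S^2H\otimes H=S^3H\oplus H$, the $S^3H$-isotypic part of the torsion bundle $\Lambda^2T^*\otimes T$ works out to $K\,S^3H\oplus 2\,E\,S^3H$. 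By contrast the extra connection freedom beyond $\fsp(2)\oplus\fsp(1)$, namely $\mathfrak{gl}(2,\mathbb{H})/\fsp(2)$ (whose complexification is $\Lambda^2_0E\oplus\C$), carries trivial $\Sp(1)$-weight, so the $S^3H$-content of $T^*\otimes\mathfrak g'$ is a single copy of $E\,S^3H$, arising only from the $\fsp(1)=S^2H$ summand. Hence $\operatorname{im}\partial$ meets the $S^3H$-isotypic part in at most one $E\,S^3H$ and in no copy of $K\,S^3H$, so $\operatorname{coker}\partial$ retains both $K\,S^3H$ and one $E\,S^3H$.

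It remains to match these against $\xi$. By Schur's lemma $\partial$ sends the summands $K\,S^3H$ and $E\,S^3H$ of the torsion into the corresponding isotypic pieces of $\Lambda^2T^*\otimes T$, and the count above shows these survive in $\operatorname{coker}\partial$; thus $\Theta$ is injective on $K\,S^3H\oplus E\,S^3H$. Conversely, the $H$-type summands $KH$ and $EH$ of $\xi$ must lie in $\ker\Theta$, which is exactly the statement that $\partial$ is surjective onto the $H$-isotypic part of $\Lambda^2T^*\otimes T$ --- equivalently, that the quaternionic intrinsic torsion is purely of type $S^3H$. Granting this, $\ker\Theta=KH\oplus EH$ and the corollary follows.

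The main obstacle is precisely this last surjectivity, together with the multiplicity bookkeeping that separates the two copies of $E\,S^3H$ in $\Lambda^2T^*\otimes T$: one must check that $\partial\xi$ genuinely lands in $\operatorname{im}\partial$ when $\xi$ is of $H$-type, and that the copy of $E\,S^3H$ reached by $\partial$ is transverse to the image of the $E\,S^3H$-part of $\xi$ under alternation, so that no $S^3H$-torsion is spuriously absorbed. Both points can be settled by the explicit intrinsic-torsion computations of Salamon and of Cabrera--Swann cited above, in which $\partial$ and the decomposition of $\operatorname{coker}\partial$ are tabulated; alternatively the relevant nonvanishing scalars can be pinned down directly by evaluating $\partial$ on highest-weight vectors in the $E$-$H$ model.
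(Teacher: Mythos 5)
Your overall strategy is sound and is in fact the standard argument behind this characterization, which the paper itself does not prove: it quotes the result (following Swann, with the remark that ``quaternionic'' means existence of a torsion-free $\GL(2,\mathbb{H})\Sp(1)$-connection, characterized by the absence of $S^3H$). Your reduction to computing $\ker\Theta$ for the equivariant map $\Theta(\xi)=[\partial\xi]\in\operatorname{coker}\partial$ is correct, and so are both multiplicity counts: the $S^3H$-isotypic part of $\Lambda^2T^*\otimes T$ is $K\,S^3H\oplus 2E\,S^3H$, while $T^*\otimes\mathfrak{g}'$ contains a single $E\,S^3H$. But there is a genuine gap exactly at the crux: the two facts you defer --- (i) that the $H$-type components $KH\oplus EH$ of $\xi$ are absorbed into $\operatorname{im}(\partial|_{T^*\otimes\mathfrak{g}'})$, and (ii) that the image of the $E\,S^3H$-component of $\xi$ is transverse to that image --- are jointly equivalent to the corollary itself. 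Settling them by appeal to the computations of Salamon and Cabrera--Swann is circular, since those are precisely the computations that establish the statement, and the alternative you mention (evaluating $\partial$ on highest-weight vectors) is announced but not carried out.

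Both gaps can be closed inside your framework by two inputs you never exploit. First, $\Theta$ is automatically \emph{surjective}: $\partial$ is an isomorphism on $T^*\otimes\so(8)$ (uniqueness of the Levi-Civita connection), so every torsion class modulo $\operatorname{im}(\partial|_{T^*\otimes\mathfrak{g}'})$ is realized by the $T^*\otimes\mathfrak{m}$-component of some $\psi\in T^*\otimes\so(8)$, because $\fsp(2)\oplus\fsp(1)\subset\mathfrak{g}'$. Second, the first prolongation of the quaternionic algebra is known: $\mathfrak{g}'^{(1)}\simeq T^*$ (it is the degree-one piece of the $|1|$-graded algebra $\mathfrak{sl}(3,\mathbb{H})$), so $\dim_\C\ker(\partial|_{T^*\otimes\mathfrak{g}'})=8$ and hence $\dim_\C\operatorname{coker}\partial=224-(152-8)=80$. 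Your multiplicity count already gives $\operatorname{coker}\partial\supseteq K\,S^3H\oplus E\,S^3H$, of dimension $64+16=80$, so this containment is an equality; surjectivity of $\Theta$ then yields $\dim_\C\ker\Theta=120-80=40$. Since $\ker\Theta$ is a submodule of \eqref{modules}, whose summands have dimensions $16,64,32,8$ and each occur with multiplicity one, the only $40$-dimensional possibility is $KH\oplus EH$. This closes both (i) and (ii) at once, with no transversality or absorption computation needed.
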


\noindent The adjective `quaternionic' here means that the
underlying $\GL(2,\mathbb{H})\Sp(1)$ admits a torsion-free
connection. As stated, this condition is characterized by the
absence of the $\Sp(1)$ module $S^3H$, and ensures that the
`twistor space' associated to $M$, defined in
\cite[Ch.~14]{Besse}, is a complex manifold.\smallbreak

\begin{corollary} The fundamental 4-form of an \aqH\ 8-manifold is
closed, i.e., $M$ is almost parallel if and only if $\xi\in KS^3H.$
\end{corollary}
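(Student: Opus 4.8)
The plan is to realize $d\Omega$ as the image of the intrinsic torsion under a single $\Sp(2)\Sp(1)$-equivariant map and then to read off its kernel from a representation-theoretic comparison. Since the Levi-Civita connection is torsion-free, on forms one has $d=a\circ\nabla$, where $a\colon T^*\otimes\Lambda^4T^*\to\Lambda^5T^*$ is the alternation. Applying this to the fundamental form gives $d\Omega=a(\nabla\Omega)$, and because $\Omega$ is $\Sp(2)\Sp(1)$-invariant, $\nabla\Omega$ is the image of $\xi$ under an equivariant embedding of the torsion module \eqref{modules} into $T^*\otimes\Lambda^4T^*$. Thus $d\Omega$ is a linear, $\Sp(2)\Sp(1)$-equivariant function of $\xi$ with values in $\Lambda^5T^*$, and the statement will follow once we identify which irreducible summands of \eqref{modules} survive under this map.

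The second step is to decompose the target $\Lambda^5T^*$. The Hodge star is an $\SO(8)$-equivariant, hence a fortiori $\Sp(2)\Sp(1)$-equivariant, isomorphism $\Lambda^5T^*\cong\Lambda^3T^*$, so it suffices to decompose $\Lambda^3(E\otimes H)$ using $T^*\otimes_\R\C=E\otimes H$. By the Cauchy formula together with Littlewood's $\GL(4)\downarrow\Sp(2)$ branching rule (and $\GL(2)\downarrow\Sp(1)$ for the $H$-factor) one gets
\begin{equation*}
\Lambda^3(E\otimes H)\;\cong\;\Lambda^3E\otimes S^3H\;\oplus\;S_{(2,1)}E\otimes H\;\cong\;E\,S^3H\oplus KH\oplus EH,
\end{equation*}
where $\Lambda^3E\cong E$ by contraction with the symplectic form, $S_{(2,1)}E\cong K\oplus E$, and $S_{(2,1)}H\cong H$. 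A dimension count $(16+32+8=56=\binom 83)$ confirms the decomposition. Comparing with \eqref{modules}, the crucial point is that $K\,S^3H$ does \emph{not} occur in $\Lambda^5T^*$, whereas $E\,S^3H$, $KH$ and $EH$ each occur there exactly once.

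The third step invokes Schur's lemma. Since every irreducible appears with multiplicity one both in \eqref{modules} and in $\Lambda^3T^*$, the equivariant map $\xi\mapsto d\Omega$ acts as a scalar on each of the common summands $E\,S^3H$, $KH$, $EH$, and it must annihilate $K\,S^3H$, which has no target to land in. Consequently $d\Omega=0$ is equivalent to the vanishing of the $E\,S^3H$-, $KH$- and $EH$-components of $\xi$, that is to $\xi\in K\,S^3H$, \emph{provided} the three scalars are all nonzero.

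Establishing this nonvanishing is the main obstacle: Schur's lemma guarantees that a copy of each module exists in $\Lambda^5T^*$, but not that the alternation $a$ actually reaches it. I would settle the point by a direct computation, choosing for each of $E\,S^3H$, $KH$, $EH$ an explicit highest-weight torsion tensor in the $E$-$H$ formalism of \cite{Salamon82}, forming the corresponding $\nabla\Omega$, and checking that its alternation is a nonzero $5$-form; alternatively one may read the three coefficients off the explicit intrinsic-torsion formulae for $d\Omega$ in \cite{Cabrera2007}. Everything else is bookkeeping of weights, so this nonvanishing verification is where the argument really has to be pinned down.
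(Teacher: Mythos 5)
The paper itself offers no proof of this corollary --- it is quoted as a known consequence of Swann's analysis \cite{Swann89} (see also \cite{Cabrera2007}) --- so your attempt is measured against the standard argument from the literature, which it essentially reconstructs, and its skeleton is correct. Indeed $d=a\circ\nabla$ for the Levi-Civita connection; $\nabla\Omega$ is the image of $\xi$ under an equivariant injection, because the stabilizer of $\Omega$ in $\so(8)$ is exactly $\fsp(2)\oplus\fsp(1)$; and your Cauchy/branching computation is right: for $\Sp(2)$ one has $\Lambda^3_0E=0$, so $\Lambda^3E\simeq E$, while $S_{(2,1)}E\simeq K\oplus E$ and $S_{(2,1)}H\simeq H$, giving $\Lambda^3(E\otimes H)\simeq E\,S^3H\oplus KH\oplus EH$ with the dimension count $16+32+8=56$ as a check. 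Since $K\,S^3H$ is absent from $\Lambda^5T^*\simeq\Lambda^3T^*$ while each of the other three summands of \eqref{modules} occurs there exactly once, Schur's lemma reduces the corollary to the nonvanishing of three scalars, one for each of $E\,S^3H$, $KH$, $EH$.

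That nonvanishing is the genuine gap, and you correctly identify it but leave it undone: as written, you have proved only the implication $\xi\in KS^3H\Rightarrow d\Omega=0$. The converse fails if any of the three scalars vanishes, and Schur's lemma alone cannot exclude this, since the zero map is equivariant. Your proposed remedies are both viable --- the highest-weight computation, or quoting the explicit torsion formulae of \cite{Cabrera2007}, which is in effect what the present paper does by citing the result wholesale. It is worth noting that two of the three constants can also be pinned down by examples rather than computation. A conformal change of a quaternionic K\"ahler metric produces torsion lying precisely in the $EH$ summand (as remarked in Section~2 of the paper) with $d\Omega$ a nonzero multiple of $df\wedge\Omega$, so the $EH$ scalar is nonzero. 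Moreover, the structure of Theorem~\ref{main} has nonzero torsion in $\cW_1=E\,S^3H$ --- it cannot vanish, since otherwise $\SU(3)$ would carry a closed $\Omega$, contradicting $b_4(\SU(3))=0$ together with $\int\Omega\wedge\Omega\neq0$ --- and $d\Omega\neq0$ for the same cohomological reason, so the $E\,S^3H$ scalar is nonzero as well (this is not circular, as Theorem~\ref{main} does not invoke the present corollary). Only the $KH$ scalar genuinely requires the direct verification you describe.
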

\begin{corollary}\label{idealclass} The 2-forms $\{\omega_i\}$ of an
\aqH\ 8-manifold generate a differential ideal if and only if $\xi\in
ES^3H\oplus EH,$
\end{corollary}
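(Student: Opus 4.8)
The plan is to translate the differential-ideal condition into a single representation-theoretic condition on the intrinsic torsion, by analysing the $3$-forms $d\w_i$. Complexifying via \eqref{TEH} and applying the Cauchy formula to $\Lambda^3(E\otimes H)$ (using $\Lambda^3E\cong E$ and $S_{(2,1)}E\cong K\oplus E$ for $\Sp(2)$) gives the decomposition $\Lambda^3T^*\otimes\C\cong KH\oplus EH\oplus ES^3H$. The first step is to locate the subspace of ``ideal'' $3$-forms, namely the image of the wedge map $T^*\otimes\cI\to\Lambda^3T^*$, $\bbb\otimes\w\mapsto\bbb\wedge\w$. Since $\cI\otimes\C\cong S^2H$ (the $\Sp(1)$-adjoint, with $\Sp(2)$ acting trivially), the complexified domain is $EH\otimes S^2H\cong ES^3H\oplus EH$, each summand of multiplicity one. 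As these are also the only summands of $\Lambda^3T^*\otimes\C$ that occur, checking that the $ES^3H$- and $EH$-projections of the wedge map are both nonzero shows the image is all of $ES^3H\oplus EH$. Hence $\Lambda^3T^*\otimes\C=(ES^3H\oplus EH)\oplus KH$, the ideal forms being exactly the first summand and $KH$ the obstruction.

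Next I would write $d\w_i=\suml_j\aaa_i^j\wedge\w_j+\mathrm{Alt}(\tau_i)$, where $\aaa_i^j$ is the skew $\Sp(1)$-part of the Levi-Civita connection and $\tau_i$ is the intrinsic-torsion part of $\nabla\w_i$. The connection terms already lie in the ideal, so \eqref{ideal} holds if and only if $\mathrm{Alt}(\tau_i)$ has vanishing $KH$-component for every $i$. Because the assignment $I_i\mapsto d\w_i$ is equivariant (this is the basis-independence of condition~(2) recorded after Theorem~\ref{qK}), the induced obstruction is a $\Sp(2)\Sp(1)$-equivariant map
\[
\Phi\colon\cW\lr\cI^*\otimes KH\cong S^2H\otimes KH\cong KS^3H\oplus KH,\qquad\xi\longmapsto\bigl(I_i\mapsto\pi_{KH}(d\w_i)\bigr).
\]
Since $\cW=ES^3H\oplus KS^3H\oplus KH\oplus EH$, Schur's lemma forces $\Phi$ to annihilate $ES^3H$ and $EH$, which do not occur in the target, and to act as a scalar on each of $KS^3H$ and $KH$. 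This yields one direction immediately: if $\xi\in ES^3H\oplus EH$ then $\Phi(\xi)=0$, so the $\w_i$ generate a differential ideal.

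For the converse it remains to show both scalars are nonzero, so that $\ker\Phi=ES^3H\oplus EH$ exactly; this non-degeneracy is the main obstacle. On the $KS^3H$ summand it comes for free from Theorem~\ref{qK}: since $d\Om=0$ is equivalent to $\xi\in KS^3H$, a torsion with $0\neq\xi\in KS^3H$ that also satisfied the ideal condition would verify both hypotheses of the $n=2$ case and hence be quaternionic K\"ahler, forcing $\xi=0$; thus $\Phi$ is injective on $KS^3H$. The scalar on $KH$ does not follow from Swann's theorem and must be checked directly. I would select an explicit highest-weight torsion tensor in $KH$ in the $E$-$H$ formalism, compute the associated $\mathrm{Alt}(\tau_i)$ and project onto the $KH$-component of $\Lambda^3T^*$, verifying it does not vanish (equivalently, exhibiting a structure with $\xi\in KH$ whose $\w_i$ fail \eqref{ideal}). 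With both scalars nonzero one has $\ker\Phi=ES^3H\oplus EH$, which is precisely the asserted equivalence.
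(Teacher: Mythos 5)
Your representation-theoretic scaffolding is sound: for $n=2$ one indeed has $\Lambda^3T^*\otimes\C\simeq KH\oplus EH\oplus ES^3H$, the ideal 3-forms do sit inside the $ES^3H\oplus EH$ summand, and Schur's lemma correctly reduces the whole corollary to the non-vanishing of the two scalars by which your obstruction map $\Phi$ acts on $KS^3H$ and on $KH$. (The paper itself states this corollary without proof, as a consequence of Swann's torsion decomposition, so your proposal has to stand on its own completeness.) The genuine gap is that the computations to which you have reduced everything are never carried out. The non-vanishing of the scalar on $KH$ is the entire content of the ``only if'' direction: if that scalar were zero, the ideal condition would instead characterize $\xi\in ES^3H\oplus KH\oplus EH$ and the corollary as stated would be false. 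You explicitly defer this to a highest-weight computation (``I would select an explicit highest-weight torsion tensor\dots verifying it does not vanish'') which is a plan, not a proof; the argument is therefore incomplete exactly at its only nontrivial point. The same remark applies, less critically, to the claim that both projections of the wedge map $T^*\otimes\cI\to\Lambda^3T^*$ are nonzero: your ``if'' direction needs the image to be \emph{all} of $ES^3H\oplus EH$ (vanishing of the $KH$-component of $d\w_i$ only helps if every element of $ES^3H\oplus EH$ is genuinely of the form $\suml_j\bbb^j\wedge\w_j$), and this too is asserted rather than checked.

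There is also a flaw in the step you describe as coming ``for free'' from Theorem~\ref{qK}. That theorem is a statement about manifolds, so from the hypothesis that the scalar on $KS^3H$ vanishes you may conclude only that every \aqH\ 8-manifold with $d\Om=0$ is quaternionic K\"ahler; by itself this is not a contradiction, since it is formally consistent with the theorem that no manifold carries nonzero $KS^3H$-torsion. To obtain a contradiction you must exhibit a manifold with closed but non-parallel fundamental 4-form, i.e.\ invoke Salamon's example \cite{Salamon00} cited in the introduction (or else prove an algebraic realization statement for torsion tensors). With that citation added your treatment of the $KS^3H$ scalar is fine; the unperformed computation for the $KH$ scalar remains the essential missing piece.
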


\noindent These two corollaries represent the conditions stated in
Theorem \ref{qK} for $n=2$, namely (1) and (2) respectively. The
author does not know of any example in the literature of a non
quaternionic K\"ahler 8-manifold in the class described by Corollary
\ref{idealclass}.\smallbreak

One can establish a useful analogy between the 16 Gray--Hervella
classes of almost Hermitian $2n$-manifolds and the 16 classes of
\aqH 8-manifolds. This is best done by indicating the
$\Sp(2)\Sp(1)$-modules in Equation~\eqref{modules} by the symbols
$\cW_1,\dots,\cW_4$ respectively. This ensures that, in both
cases, `integrability' corresponds to vanishing of the
$\cW_1\oplus\cW_2$ component, and a conformal change in the metric
modifies in an essential way only the $\cW_4$ component.

On the other hand, the class of \aqH\ 8-manifolds with intrinsic
torsion belonging to $\cW_1\oplus\cW_4$ in Corollary
\ref{idealclass} is the \aqH\ analogue of the family of almost
Hermitian manifolds containing nearly K\"ahler and locally
conformal K\"ahler manifolds described by Butruille
\cite{Butruille} and Cleyton \& Ivanov \cite{Cleyton}. For more
details on the classification of \aqH\ manifolds see Cabrera \&
Swann \cite{Cabrera2007}.

Consider the homomorphism
\[\phi\colon \Sp(1)\to \Sp(2)\times \Sp(1)\]
defined by $\phi(g)=(i(g),g)$, where $i\colon
\Sp(1)\hookrightarrow \Sp(2)$ is the inclusion whereby $\Sp(1)$
acts irreducibly on $\C^4$. By definition, $\Sp(2)\Sp(1)$ is a
$\mathbb{Z}_2$ quotient of $\Sp(2)\times \Sp(1)$ whose kernel is
generated by $(-\mathbf1,-\mathbf1)$. Therefore, $\phi$ induces an
inclusion
\begin{equation}\label{SO3}
\SO(3)=\Sp(1)/\mathbb{Z}_2\to \Sp(2)\Sp(1)\subset \SO(8),
\end{equation}
and in this paper we shall effectively be considering such
$\SO(3)$ structures on 8-manifolds.

Let $M$ be an 8-manifold with an $\SO(3)$-structure compatible
\eqref{SO3}.  Using the well-known formula
$$S^pH\otimes S^qH\simeq\bigoplus_{n=0}^{\min(p,q)}S^{p+q-2n}H,$$ the
complexified tangent space \eqref{TEH} now splits as
\begin{equation}\label{S3H}
S^3H\otimes H \simeq S^2H\oplus S^4H.
\end{equation}
The underlying quaternionic action is defined by a suitable inclusion
of $S^2H$ in the space of anti-symmetric endomorphisms of the tangent
space, isomorphic to
\[ \Lambda^2T^* \simeq 2S^6H\oplus S^4H\oplus 3S^2H.\]
Its image is a coefficient bundle of purely imaginary quaternions.

Relative to \eqref{SO3}, we have $E\simeq S^3H$ where $H$ now
denotes the spin representation of $\mathrm{Spin}(3)$. It follows
that $\Lambda_0^2E\simeq S^4H$ and $K\simeq S^7H\oplus S^5H\oplus
S^1H$.  The intrinsic torsion space \eqref{modules} then
decomposes as follows
\[\begin{array}{ccccl}
\cW_1 &\!=\!& E\,S^3H &\!\simeq\!&
S^6H\oplus S^4H\oplus S^2H\oplus S^0H\\[2pt]
\cW_2 &\!=\!& K\,S^3H &\!\simeq\!&
S^{10}H\oplus 2S^8H\oplus 2S^6H\oplus 3S^4H\oplus 2S^2H\\[2pt]
\cW_3 &\!=\!& KH &\!\simeq\!&
S^8H\oplus 2S^6H\oplus S^4H\oplus S^2H\oplus S^0H\\[2pt]
\cW_4 &\!=\!& EH &\!\simeq\!& S^4H\oplus S^2H.
\end{array}\]
These isomorphisms reveal the presence of a 2-dimensional space of
$\SO(3)$-invariant tensors. Our aim is to describe an example with
intrinsic torsion in the summand $S^0H$ in $\cW_1$. Such a `nearly
quaternionic structure' will be found on $\SU(3)$, although the
general theory of $\SO(3)$ structures on 8-manifolds will be
pursued elsewhere \cite{Chiossi}.

Manifolds with $\SO(3)$ structure as in \eqref{S3H} are special
cases of those considered by Swann \cite{Swann91}, and later
Gambioli \cite{Gambioli08a}, in the context of nilpotent coadjoint
orbits of a complex Lie group. The same structure arises naturally
on the total space of a rank 3 vector bundle over $\SU(3)/\SO(3)$
\cite{Conti05,Gambioli08b}, and can be analysed with the methods
of Conti \cite{Conti07}.

For the special case of $\SU(3)$,  the tangent space can be
identified with the Lie algebra
\begin{equation}\label{su3}
 \su(3)=\ss\oplus\pp
\end{equation}
of complex anti-Hermitian $3\times3$ matrices. Here $\ss$ is an
abbreviation for the subalgebra $\so(3)$ of real anti-symmetric
matrices, whereas $\pp$ is the space of matrices of the form $iS$
with $S$ symmetric and trace-free. Observe that the decomposition
\eqref{su3} is consistent with \eqref{S3H} with $\ss\simeq S^2H$
and $\pp\simeq S^4H$. On an 8-manifold, in view of \eqref{su3},
any reduction to $\SO(3)$ determines not only an almost
quaternionic structure, but also a $\mathrm{PSU}(3)$-structure in
the sense of Hitchin \cite{Hitchin}.

In Section~4, we shall define endomorphisms $I_i$ that act on
\eqref{su3} extending the adjoint representation on $\ss$.

%%%%%%%%%%%%%%%%%%%%%%%%%%%%%%%%%%%%%%%%%%%%%%%%%%%%%%%%%%%%%%%%
%%%%%%%%%%%%%%%%%%%%%%%%%%%%%%%%%%%%%%%%%%%%%%%%%%%%%%%%%%%%%%%%
%
%        SECTION 3
%
%%%%%%%%%%%%%%%%%%%%%%%%%%%%%%%%%%%%%%%%%%%%%%%%%%%%%%%%%%%%%%%%
%%%%%%%%%%%%%%%%%%%%%%%%%%%%%%%%%%%%%%%%%%%%%%%%%%%%%%%%%%%%%%%%

\section{The ideal condition}

We suppose throughout this section that $\{\w_1,\w_2,\w_3\}$ is a
locally-defined set of 2-forms associated to a basis $\{I_1,I_2,I_3\}$
of $\cI$ on an \aqH\ 8-manifold, and that the differential ideal
condition \eqref{ideal} is satisfied.

Any two bases $\{\omega_i\}$, $\{\wt\omega_i\}$ are related by
a gauge transformation of the form
\[\wt\w_i=\suml_{j=1}^3 A_i^j\w_j,\]
with $A=(A_i^j)$ taking values in $\SO(3)$ at each point. Then we
can write
\[ d\wt\w_i=\suml_{j=1}^3 \wt\bbb^j_i\wedge \wt\w_j,\]
where
\[\wt\bbb_i^j=(A^{-1})^j_l\,dA_i^l+(A^{-1})_l^j\bbb_k^lA_i^k,\]
with summation over repeated indices. The matrix $\bbb$ therefore
transforms as
\begin{equation}\label{connectionalpha}
\wt\bbb = A^{-1}dA + \mathrm{Ad}(A^{-1})\bbb.
\end{equation}
It follows that $\bbb$ represents a connection on the rank 3
vector bundle, isomorphic to $\cI$, generated by the
$\{\omega_i\}$. However, this connection does not reduce to
$\SO(3)$ unless $\bbb$ is anti-symmetric, a point we discuss next
before a brief analysis of curvature.

Consider the decomposition
\begin{equation}\label{bas}
\bbb=\aaa+\sss,
\end{equation}
where $\aaa_i^j=\frac12(\bbb_i^j-\bbb_j^i)$ and
$\sss_i^j=\frac12(\bbb_i^j+\bbb_j^i)$ are the anti-symmetric and
symmetric parts. The fact that $A\in \SO(3),$ implies that $A^{-1}dA$
lies in the Lie algebra $\mathfrak{so}(3)$ of anti-symmetric matrices.
Given that $\mathrm{Ad}$ preserves the decomposition \eqref{bas}, we
see that the symmetric part $\sss$ transforms as a tensor:
\[\wt\sss = \mathrm{Ad}(A^{-1})\sss = A^{-1}\sss A,\]
in contrast to $\bbb$.

The tensor represented by $\sss$ can be identified with the remaining
non-zero components
\[ \cW_1\oplus\cW_4\simeq ES^3H\oplus EH\]
of the intrinsic torsion, or equivalently $d\Omega$. Indeed,
\begin{equation}\label{dOmega}
d\Omega=2\suml_{i=1}^3d\omega_i\wedge\omega_i
= 2\suml_{i,j=1}^3\bbb_i^j\wedge\omega_i\wedge\omega_j
= 2\!\suml_{i,j=1}^3\sss_i^j\wedge\omega_i\wedge\omega_j.
\end{equation}
We can easily identify the component in $\cW_1$:

\begin{lemma}\label{tr} If an $\Sp(2)\Sp(1)$-structure satisfies
\eqref{ideal} then its intrinsic torsion belongs to $\cW_1$ if and
only if $\tr(\bbb)=\bbb_1^1+\bbb_2^2+\bbb_3^3$ vanishes.
\end{lemma}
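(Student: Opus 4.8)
The plan is to show that $\tr(\bbb)$ is, up to a nonzero factor, exactly the $\cW_4$ part of the intrinsic torsion, so that its vanishing is equivalent to the torsion lying in the complementary summand $\cW_1$. First I would note that since $\aaa$ in \eqref{bas} is anti-symmetric it is trace-free, whence $\tr(\bbb)=\tr(\sss)$, and that this is a globally well-defined $1$-form: $\sss$ transforms tensorially as $\mathrm{Ad}(A^{-1})\sss=A^{-1}\sss A$, and the trace is an $\SO(3)$-invariant. Because \eqref{dOmega} shows that the torsion (equivalently $d\Omega$) depends only on $\sss$, the whole question reduces to detecting the summand $\cW_4=EH$ inside $\sss$.

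The key step is a representation-theoretic identification. The symmetric matrix $\sss$ of $1$-forms is a section of $S^2(S^2H)\otimes T^*$, where the index pair transforms in the vector representation $\ss\simeq S^2H$ of $\SO(3)$. Splitting the symmetric square into its trace and trace-free parts gives the $\SO(3)$-invariant decomposition $S^2(S^2H)\simeq S^0H\oplus S^4H$. The pure-trace summand contributes $S^0H\otimes T^*\simeq T^*$, and by \eqref{TEH} this is precisely $EH=\cW_4$; it consists of the multiples of the identity, i.e. the tensors $\sss=\tfrac13\tr(\sss)\,\mathrm{Id}$. The trace-free summand $S^4H\otimes T^*$ carries the remaining module $\cW_1=ES^3H$. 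I would therefore claim that, under the identification of $\sss$ with $\cW_1\oplus\cW_4$, the summand $\cW_4$ is exactly the pure-trace part while $\cW_1$ is trace-free, so that $\tr(\sss)=0$ is equivalent to the vanishing of the $\cW_4$ component.

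To confirm this concretely and fix the constant, I would feed the splitting $\sss=\tfrac13\tr(\sss)\,\mathrm{Id}+\sss^0$ into \eqref{dOmega}. Since $\suml_i\w_i\wedge\w_i=\Omega$, the pure-trace part yields the term $\tfrac23\,\tr(\sss)\wedge\Omega$, which is of the conformal type $\theta\wedge\Omega$ that characterizes $\cW_4$; the Lefschetz-type map $\theta\mapsto\theta\wedge\Omega$ on $1$-forms is injective, so this term vanishes if and only if $\tr(\sss)=\tr(\bbb)=0$. It then remains to check that the trace-free contribution $2\suml_{i,j}(\sss^0)_i^j\wedge\w_i\wedge\w_j$ produces no term of the form $\theta\wedge\Omega$, i.e. has no $\cW_4$ component; granting this, the $\cW_4$ part of the torsion is a nonzero multiple of $\tr(\bbb)$ and the lemma follows.

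The main obstacle is precisely this last point. Abstractly $\cW_1=S^6H\oplus S^4H\oplus S^2H\oplus S^0H$ does contain copies of the modules $S^4H$ and $S^2H$ making up $\cW_4\simeq T^*$, so Schur's lemma alone does not rule out a contribution of $\sss^0$ to the conformal direction; one must use the specific embedding of $\cW_1$ in $S^4H\otimes T^*$ fixed by the intrinsic torsion, or equivalently compute the image of $\sss^0$ under $\sss^0\mapsto\suml_{i,j}(\sss^0)_i^j\wedge\w_i\wedge\w_j$ and verify that it meets the line of $\theta\wedge\Omega$'s trivially. Establishing that the trace-free part lands entirely in $\cW_1$ is the representation-theoretic heart of the argument; everything else is bookkeeping.
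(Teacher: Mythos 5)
Your overall strategy coincides with the paper's: reduce everything to the symmetric part $\sss$ of $\bbb$ via \eqref{dOmega}, split $\sss$ into trace and trace-free parts, and identify the $\cW_4$-component of the torsion with $\tr(\sss)=\tr(\bbb)$. Your Lefschetz observation---that the pure-trace part contributes $\frac23\tr(\sss)\wedge\Om$ and that $\theta\mapsto\theta\wedge\Om$ is injective on $1$-forms---is a nice explicit way of seeing that the relevant constant is nonzero, a point the paper passes over quickly. However, you stop short of proving the one statement carrying all the content, namely that the trace-free part $\sss^0$ contributes nothing to $\cW_4$; you present it only conditionally (``granting this''), so as written the proposal is incomplete precisely at what you yourself call the heart of the argument.

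Moreover, your diagnosis of why this step is difficult is mistaken, and correcting it dissolves the gap. You argue that Schur's lemma cannot settle the matter because $\cW_1\simeq S^6H\oplus S^4H\oplus S^2H\oplus S^0H$ contains copies of the modules appearing in $\cW_4\simeq T^*$. But that decomposition is the restriction to the subgroup $\SO(3)$ of \eqref{SO3}, which is relevant only to the $\SU(3)$ example of Sections~4--5. Lemma~\ref{tr} concerns a general $\Sp(2)\Sp(1)$-structure, and under $\Sp(2)\Sp(1)$ the modules $\cW_1=ES^3H$ and $\cW_4=EH$ of \eqref{modules} are irreducible and non-isomorphic. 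Concretely, at each point $\sss$ lies in
\[ EH\otimes S^2(S^2H)\simeq E\,S^5H\oplus E\,S^3H\oplus EH, \]
the map $\sss\mapsto 2\suml_{i,j}\sss_i^j\wedge\w_i\wedge\w_j$ of \eqref{dOmega} is equivariant for the structure group, and your trace-free part lies in $EH\otimes S^4H\simeq E\,S^5H\oplus E\,S^3H$, which contains \emph{no} copy of $EH$. Schur's lemma for $\Sp(2)\Sp(1)$ therefore forces the $EH$-component of the image of $\sss^0$ in $\Lambda^5T^*$ (i.e.\ its $\cW_4$-part) to vanish; only the trace summand $EH\otimes S^0H$ can produce it. This is exactly the paper's one-line proof. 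With this observation inserted, your argument closes and agrees with the paper's.
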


\begin{proof} The $\cW_4$ part of the torsion is represented by
the component of $d\Omega$ in the submodule $EH$ of
$\Lambda^5T^*$. But \eqref{dOmega} belongs to
\[ EH\otimes S^2(S^2H)\simeq E\,S^5H\oplus E\,S^3H\oplus EH,\]
and its $EH$ component can only be obtained by taking the trace over
each term $\omega_i\wedge\omega_j$, leaving us with
$2\tr(\bbb)=2\tr(\sss)$.\end{proof}

We can also use \eqref{dOmega} to give an alternative characterization
of quaternionic K\"ahler 8-manifolds. Theorem \ref{qK} implies the

\begin{corollary} Let $\{M,g,\cI\}$ be an \aqH\ $8$-manifold. It  is
quaternionic K\"ahler if and only if $\cI$ generates a differential
ideal with $\sss=0$, so that \eqref{ideal} applies with
$\bbb_i^j=-\bbb_j^i.$
\end{corollary}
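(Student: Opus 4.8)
The plan is to read the statement off Swann's criterion (Theorem~\ref{qK}), exploiting that both characterizations share the differential ideal condition \eqref{ideal} and that, once this condition is imposed, the symmetric part $\sss$ governs $d\Omega$ through \eqref{dOmega}. So the real task is to show that, under \eqref{ideal}, the two conditions ``$d\Omega=0$'' and ``$\sss=0$'' are equivalent; granting this, the pair (1)--(2) of Theorem~\ref{qK} is exactly the pair (ideal, $\sss=0$), and the Corollary follows. Note that $\sss=0$ is literally the assertion that $\bbb=\aaa$ is antisymmetric, i.e. $\bbb_i^j=-\bbb_j^i$, so no separate translation is needed at the end.

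For the reverse implication, the easy one, I would assume that $\cI$ generates a differential ideal with $\sss=0$. Substituting $\sss_i^j=0$ into \eqref{dOmega} gives $d\Omega=2\sum_{i,j}\sss_i^j\wedge\w_i\wedge\w_j=0$, so both conditions (1) and (2) of Theorem~\ref{qK} hold and $\{M,g,\cI\}$ is quaternionic K\"ahler.

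For the forward implication I would assume $\{M,g,\cI\}$ quaternionic K\"ahler. By Theorem~\ref{qK} the ideal condition \eqref{ideal} holds and $d\Omega=0$, so everything comes down to producing a representative $\bbb$ with vanishing symmetric part. The clean way is to invoke the identification, set up just before Lemma~\ref{tr}, of the tensor $\sss$ with the full surviving torsion $\cW_1\oplus\cW_4\simeq ES^3H\oplus EH$ (equivalently with $d\Omega$): a quaternionic K\"ahler structure has $\xi=0$, hence this component vanishes and $\sss=0$. Alternatively, and more self-containedly, I would read off from \eqref{dOmega} that $d\Omega=0$ forces $\sum_{i,j}\sss_i^j\wedge\w_i\wedge\w_j=0$, and then argue that this annihilation already pins $\sss$ down to zero.

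The hard part will be precisely this last step: showing that $d\Omega=0$ yields $\sss=0$ rather than merely the vanishing of some projection of $\sss$. As a raw tensor the symmetric matrix $\sss$ lies in $EH\otimes S^2(S^2H)\simeq ES^5H\oplus ES^3H\oplus EH$, which carries an extra $ES^5H$ summand beyond the torsion modules $\cW_1\oplus\cW_4$, and this summand could a priori sit in the kernel of the wedge map $\sss\mapsto\sum_{i,j}\sss_i^j\wedge\w_i\wedge\w_j$ appearing in \eqref{dOmega}. The way around this is to use the gauge freedom in $\bbb$: the matrix is determined by the $d\w_i$ only up to 1-forms $\gamma$ with $\sum_j\gamma_i^j\wedge\w_j=0$, and one shows that any part of $\sss$ killed by the wedge map can be absorbed into this ambiguity, so that a genuinely antisymmetric representative can be selected. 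This is exactly the force of the identification of $\sss$ with $\cW_1\oplus\cW_4$; combined with Lemma~\ref{tr}, which isolates the $\cW_4=EH$ trace part of $\sss$, it closes the argument.
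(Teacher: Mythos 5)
Your main skeleton is correct and is essentially the paper's own argument: the paper deduces this Corollary directly from Theorem~\ref{qK}, using \eqref{dOmega} together with the identification, stated just before Lemma~\ref{tr}, of the tensor $\sss$ with the surviving torsion in $\cW_1\oplus\cW_4$. Your backward implication (ideal condition plus $\sss=0$ forces $d\Omega=0$ via \eqref{dOmega}, hence quaternionic K\"ahler by Theorem~\ref{qK}) and your ``route (a)'' for the forward implication (quaternionic K\"ahler $\Rightarrow$ $\xi=0$ $\Rightarrow$ the $\cW_1\oplus\cW_4$ component represented by $\sss$ vanishes) coincide with what the paper intends. You are also right to flag that the raw tensor $\sss$ lives in $ES^5H\oplus ES^3H\oplus EH$, so that $d\Omega=0$ alone only kills a projection of $\sss$; this is a genuine subtlety that the paper's one-line proof glosses over.

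However, the mechanism you propose for closing that gap is wrong. You want to absorb the potential $ES^5H$ part of $\sss$ into the ambiguity of $\bbb$, i.e.\ into matrices $\gamma$ of 1-forms with $\suml_j\gamma_i^j\wedge\w_j=0$. But for $\Sp(2)\Sp(1)$-structures (indeed for all $n\geq2$) there is no such ambiguity: the wedge map $T^*\otimes S^2H\to\Lambda^3T^*$, $(\gamma^j)\mapsto\suml_j\gamma^j\wedge\w_j$, is injective, since its source is $ES^3H\oplus EH$, each summand occurs with multiplicity one in $\Lambda^3T^*\simeq ES^3H\oplus KH\oplus EH$, and both Schur constants are non-zero --- this is the standard fact underlying the uniqueness of the $\fsp(1)$-connection forms in quaternionic K\"ahler geometry. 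So $\bbb$ is \emph{uniquely} determined by \eqref{ideal}, and there is nothing to absorb. The correct resolutions are instead: (i) since $\bbb$ is unique, $\sss$ is a pointwise $\Sp(2)\Sp(1)$-equivariant linear function of the intrinsic torsion, which under \eqref{ideal} lies in $ES^3H\oplus EH$ by Corollary~\ref{idealclass}; Schur's lemma then forces the $ES^5H$ component of $\sss$ to vanish identically, which is the precise content of the paper's identification; or (ii), most economically for the forward implication, quaternionic K\"ahler means the Levi-Civita connection preserves $\cI$, so $d\w_i=\suml_j\theta_i^j\wedge\w_j$ with $\theta$ the induced antisymmetric ($\so(3)$-valued) connection matrix; this exhibits an antisymmetric solution of \eqref{ideal}, and by the uniqueness just noted it \emph{is} $\bbb$, so $\sss=0$ outright, with no module theory needed.
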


Returning to \eqref{ideal}, we may consider the matrix
$B=(B_i^j)$ of curvature 2-forms associated to
the connection we have considered. These 2-forms arise in the
computation
\begin{equation}\label{Bij}
0 = d^2\omega_i =
\suml_j(d\bbb^j_i-\suml_k\bbb^k_i\wedge\bbb_k^j)\wedge\omega_k =
\suml_j B^j_i\wedge\omega_j,
\end{equation} which also provides a constraint on
them. In particular, they have no $S^2E$ component, because
$\Lambda^4T^*$ contains the module $S^2E\,S^2H$ \cite{Swann89}; thus
\[  B_i^j\in S^2H\oplus \Lambda_0^2E\,S^2H\subset \Lambda^2T^*.\]
But in contrast to the quaternionic K\"ahler case, there will in
general be a component of $B_i^j$ in $\Lambda_0^2E\,S^2H$. This
will be treated in a forthcoming paper.

%%%%%%%%%%%%%%%%%%%%%%%%%%%%%%%%%%%%%%%%%%%%%%%%%%%%%%%%%%%%%%
%%%%%%%%%%%%%%%%%%%%%%%%%%%%%%%%%%%%%%%%%%%%%%%%%%%%%%%%%%%%%%
%
%     SECTION 4
%
%%%%%%%%%%%%%%%%%%%%%%%%%%%%%%%%%%%%%%%%%%%%%%%%%%%%%%%%%%%%%%
%%%%%%%%%%%%%%%%%%%%%%%%%%%%%%%%%%%%%%%%%%%%%%%%%%%%%%%%%%%%%%

\section{Quaternionic endomorphisms of $\mathfrak{su}(3)$}

Let $E_{ij}$ denote $3\times 3$ matrix with a $1$ in the $ij$
position, and $0$'s elsewhere. We adopt the following basis of the Lie
algebra \eqref{su3} of anti-Hermitian matrices:
\begin{equation}\label{choice}
\left\{\begin{array}{rcl}
e_1&=&i\left(E_{11}-E_{33}\right)\\[4pt]
e_2&=&\textstyle\frac1\sq i
      \left(-E_{11}+2E_{22}-E_{33} \right)\\[4pt]
e_3&=&i\left(E_{21}+E_{12}\right)\\[4pt]
e_4&=&i\left(E_{31}+E_{13}\right)\\[4pt]
e_5&=&i\left(E_{32}+E_{23}\right)\\[4pt]
e_6&=&\left(E_{21}-E_{12}\right)\\[4pt]
e_7&=&\left(E_{31}-E_{13}\right)\\[4pt]
e_8&=&\left(E_{32}-E_{23}\right)
\end{array}\right.
\end{equation}
The choice of basis has been taken such that it is conformal
relative to  minus the Killing form:
\[  \tr(e_ie_j)=-2\delta_{ij}.\]
Observe that $\{e_6,e_7,e_8\}$ is a basis of the subalgebra $\so(3)$
of real anti-symmetric matrices.

The Lie bracket of $\su(3)$ is defined by $[A,B]=AB-BA$. For the basis
above, they are given by
\[\begin{array}{lllll}
[e_1,e_2]=0 & & &\\{}
[e_1,e_3]=e_6 & [e_2,e_3]=-\sq e_6 & &\\{}
[e_1,e_4]=2e_7 & [e_2,e_4]=0 & [e_3,e_4]=e_8 &\\{}
[e_1,e_5]=e_8 & [e_2,e_5]=\sq e_8 &[e_3,e_5]=e_7 & [e_4,e_5]=e_6\\{}
[e_1,e_6]=-e_3 & [e_2,e_6]=\sq e_3 &[e_3,e_6]=e_1-\sq e_2& [e_4,e_6]=-e_5\\{}
[e_1,e_7]=-2e_4 & [e_2,e_7]=0 &[e_3,e_7]=-e_5 & [e_4,e_7]=2e_1\\{}
[e_1,e_8]=-e_5 & [e_2,e_8]=-\sq e_5 &[e_3,e_8]=-e_4 & [e_4,e_8]=e_3,
\end{array}\]
together with
\[\begin{array}{lllll}
[e_5,e_6]=e_4 & & & \\{}
[e_5,e_7]=e_3 & [e_6,e_7]=e_8 & &\\{}
[e_5,e_8]=e_1+\sq e_2 & [e_6,e_8]=-e_7 & [e_7,e_8]=e_6. &
\hskip80pt
\end{array}\]
We can regard these elements as left-invariant vector fields on
the Lie group $\SU(3)$.

Now let $\{e^1,\ldots,e^8\}$ be the dual basis
$\mathfrak{su}(3)^*$, or equivalently left-invariant 1-forms on
$\SU(3)$, so that $e^i(e_j)=\delta^i_j$. Using the Cartan formula,
we arrive at the exterior differential system\smallbreak

\begin{eqnarray*}
de^1 &=& -e^{36}-2e^{47}-e^{58},\\
de^2 & = & \sq(e^{36}-e^{58}),\\
de^3 &=& e^{16}-\sq e^{26}-e^{48}-e^{57},\\
de^4 &= & 2e^{17}+e^{38}-e^{56},\\
de^5&= & e^{18}+\sq e^{28}+e^{37}+e^{46},\\
de^6&= & -e^{13}+\sq e^{23}-e^{45}-e^{78},\\
de^7 &= &-2e^{14}-e^{35}+e^{68},\\
de^8&=& -e^{15}-\sq e^{25}-e^{34}-e^{67}.
\end{eqnarray*}
\smallbreak

Referring to \eqref{su3}, we shall use the notation $S^2H$ to indicate
the space of quaternionic endomorphisms, and identify it with $\ss$ in
a natural way by setting
\begin{equation}\label{ident}
I_1=e_8,\quad I_2=-e_7,\quad I_3=e_6.
\end{equation}
We are going to define an $\SO(3)$-equivariant linear mapping
\begin{equation}\label{equi}
S^2H\otimes(\ss\oplus\pp)\to(\ss\oplus\pp),
\end{equation}
by considering the associated four maps one at a time. In view
of the isomorphisms
\[\begin{array}{l}
S^2H\otimes S^2H\simeq S^4H\oplus S^2H\oplus S^0H,\\[3pt] S^2H\otimes
S^4H\simeq S^6H\oplus S^4H\oplus S^2H,
\end{array}\]
each of the four maps is uniquely determined up to a scalar multiple.

Any equivariant linear map \eqref{equi} must therefore be a
linear combination of the following four non-zero maps:
\[\begin{array}{lcll}
\phi_1 &:& S^2H\otimes\ss\to\ss\quad & (A,B)\mapsto [A,B]\\[3pt]
\phi_2 &:& S^2H\otimes\ss\to\pp & (A,B)\mapsto\textstyle
i\big(\{A,B\}-\frac23\tr(AB)\mathbf1\big)\\[3pt]
\phi_3 &:& S^2H\otimes \pp \rightarrow \ss & (A,C)\mapsto i \{A,C\}\\[3pt]
\phi_4 &:& S^2H\otimes \pp\to\pp & (A,C)\mapsto [A,C].
\end{array}\]
Here, $A\in S^2H$ is identified with an element of $\ss$ via
\eqref{ident}, $B\in\ss$, and $C\in\pp$. Also, $\{A,B\}=AB+AB$ is the
anti-commutator, and $\bf1$ denotes the $3\times3$ identity
matrix. Note that all the images on the right-hand side have zero
trace and are anti-Hermitian, as required.

\begin{proposition}\label{onepara} There is a one-parameter family of
 $\SO(3)$-invariant quaternionic actions on $\su(3).$
\end{proposition}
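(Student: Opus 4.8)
The plan is to realize any $\SO(3)$-invariant quaternionic action as a suitable linear combination of the four equivariant maps $\phi_1,\dots,\phi_4$, and then to pin down which combinations actually define a quaternionic structure. Since the target space \eqref{equi} decomposes as $\ss\oplus\pp\simeq S^2H\oplus S^4H$ and the four maps $\phi_i$ are uniquely determined up to scalar by the Clebsch--Gordan isomorphisms quoted before the statement, the most general $\SO(3)$-equivariant map is
\[
I = a_1\phi_1 + a_2\phi_2 + a_3\phi_3 + a_4\phi_4,
\]
with $a_1,\dots,a_4\in\R$. The question is thus reduced to determining for which coefficients this prescription endows $\su(3)$ with a genuine quaternionic action, i.e. one for which the generators $I_1,I_2,I_3$ arising from \eqref{ident} satisfy the quaternion relations $I_i I_j = -\delta_{ij}\mathbf 1 + \sum_k \varepsilon_{ijk} I_k$ as endomorphisms of $\su(3)$.

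First I would fix a normalization so that the action restricted to $\ss\simeq S^2H$ reproduces the adjoint representation, as promised at the end of Section~2; this corresponds to the map $\phi_1=\mathrm{ad}$ and forces $a_1$ to a definite value. With that in place, the relation $I_{A}\circ I_{B} + I_{B}\circ I_{A} = -2\langle A,B\rangle\,\mathrm{Id}$ on $\ss\oplus\pp$ (equivalently, the requirement that $A\mapsto I_A$ be a Clifford-type map for $A\in S^2H$) splits into four blocks according to where each factor lands: $\ss\to\ss$, $\ss\to\pp$, $\pp\to\ss$, and $\pp\to\pp$. Each block yields algebraic conditions on the products $a_2 a_3$ and on $a_1,a_4$, obtained by composing the $\phi_i$ two at a time and collecting the commutator/anti-commutator identities for symmetric and anti-symmetric matrices in $\su(3)$. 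The upshot should be that the cross-terms $\phi_2\circ\phi_3$ and $\phi_3\circ\phi_2$ contribute the $\pp$-to-$\pp$ (resp. $\ss$-to-$\ss$) scalar needed to complement $\phi_1^2$ and $\phi_4^2$, leaving a single normalization constraint (of the schematic form $a_2 a_3 = \text{const}$ together with $a_4$ determined by $a_1$) once the overall scale on $\pp$ is free. That free scale on the $\pp$-component is exactly the surviving one-parameter freedom, giving the claimed family.

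The cleanest route is probably to exploit the $E$-$H$ bookkeeping directly rather than multiplying matrices blindly: since $\ss\simeq S^2H$, $\pp\simeq S^4H$, and $S^2H$ is the space of imaginary quaternions acting by Clifford multiplication, the composite $I_A I_B$ on each $S^{2k}H$ summand is forced by Schur's lemma to be scalar plus the image of the $S^2H$-component of $A\otimes B$, so the quaternion identity becomes a finite list of scalar equations indexed by the summands $S^0H, S^2H, S^4H, S^6H$ appearing in $S^2H\otimes(S^2H\oplus S^4H)$. Matching these scalars against the structure constants read off from the explicit brackets in \eqref{choice} fixes the ratios among $a_1,a_2 a_3,a_4$ and exhibits the remaining scalar as the parameter.

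The main obstacle I expect is the $\pp\to\ss$ and $\ss\to\pp$ interplay through $\phi_2$ and $\phi_3$: one must verify that $i\{A,\,\cdot\,\}$ really maps $\ss$ into $\pp$ and $\pp$ into $\ss$ with the correct signs and trace corrections (the $-\frac23\tr(AB)\mathbf 1$ term in $\phi_2$ is there precisely to project onto trace-free matrices), and that the anti-commutator of a symmetric with an anti-symmetric matrix, multiplied by $i$, lands back in the correct subspace with no leakage into the wrong isotypic component. Confirming that $\phi_2$ and $\phi_3$ are mutually adjoint up to scale under the Killing form, so that their composites close up consistently, is where the genuine computation lies; once that compatibility is checked, the existence of the one-parameter family follows immediately, and I would record the explicit relations defining it (and defer to Section~4 the explicit display of $I_1,I_2,I_3$ as $8\times 8$ matrices in the basis \eqref{choice}).
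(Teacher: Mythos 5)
Your proposal follows essentially the same route as the paper: there, too, the general $\SO(3)$-equivariant map \eqref{equi} is written as a linear combination $\sum_i\lambda_i\phi_i$, the quaternion identities are imposed (in the form $I_i\cdot(I_i\cdot X)=-X$, after which $I_1I_2=I_3$ serves only to fix a sign $\epsilon=\pm1$), and the coefficients are solved for, giving $\lambda_1=\tfrac12\epsilon$, $\lambda_4=-\tfrac12\epsilon$, $\lambda_3=-\tfrac34\lambda_2^{-1}$ with $\lambda_2$ free --- exactly the structure you predict ($a_4$ tied to $a_1$, the product $a_2a_3$ constrained, and the surviving parameter being the relative scale between the $\ss$- and $\pp$-components).

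One caveat on your preliminary step. You propose to fix $a_1$ first by requiring that the action restricted to $\ss$ ``reproduce the adjoint representation.'' Once the identification \eqref{ident} is fixed, this is not a normalization you are free to make: for any equivariant map the $\ss\to\ss$ block is automatically proportional to $\mathrm{ad}(A)$ (it is the only equivariant possibility), and the quaternion relations then \emph{force} the proportionality constant to be $a_1=\pm\tfrac12$, not $1$; freezing $a_1=1$ at the outset would leave the subsequent block equations without solution. Note also that $I_A$ restricted to $\ss$ can never literally equal $\mathrm{ad}(A)$, since $\mathrm{ad}(A)$ annihilates $A$ itself; the leakage into $\pp$ through $\phi_2$ (hence $a_2\neq0$, consistent with the relation $\lambda_3=-\tfrac34\lambda_2^{-1}$) is indispensable for $I_A^2=-\mathrm{Id}$ to hold on $\ss$. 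This is a self-correcting slip --- carrying out your own second step with all four coefficients left free yields the paper's solution \eqref{quat} --- but as written the plan would stall at that point.
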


\begin{proof} Introducing a constant $\lambda_i$ for each $\phi_i$,
\eqref{equi} must be given by
\[\textstyle A\cdot X = \lambda_1[A,X^a] + i \lambda_2
\Big(\!\{A,X^a\}-\frac23\tr(AX^a)\mathbf1\!\Big) + i
\lambda_3\{A,X^s\} + \lambda_4[A,X^s],\] where $A\in S^2H$, and
$X^a=\frac12(X-X^t)$ and $X^s=\frac12(X+X^t)$ are the
(anti-)symmetric components of $X\in\su(3)$.

Recall the formula \eqref{ident} to identify endomorphisms with
elements of $\ss$. We first impose the identities
\begin{equation}\label{IIX}
I_i\cdot(I_i\cdot X)=-X,\qquad i=1,2,3.
\end{equation}
These can be used to find the $\lambda_k$ by making simple choices of
the matrix $X$. Calculations show that \eqref{IIX} holds fully
when
\begin{equation}\label{quat}\textstyle
\lambda_1=\frac12\epsilon,\qquad
\lambda_3=-\frac34(\lambda_2)^{-1},\qquad \lambda_4=-\frac12\epsilon,
\end{equation}
provided $\epsilon=\pm1$. We take $\epsilon=+1$, since this makes
the identity
\[ I_1\cdot(I_2\cdot X)=I_3\cdot X=-I_2\cdot(I_1\cdot X)\]
automatically valid (whereas $\epsilon=-1$ would give us
$I_2I_1=I_3$).
We can now parametrise the quaternionic structures
by $\lambda_2$, and the proof is complete.
\end{proof}

We shall denote by $\cI_\lambda$ the quaternionic structure defined by
\eqref{quat} in terms of the parameter $\lambda:=\lambda_2$. Recall
that $\{e^i\}$ is an orthonormal basis of $\su(3)$ for a multiple of
the Killing form. Next, we deform this metric by rescaling on the
subspace $\ss$.

\begin{proposition} The Riemannian metric
\begin{equation}\label{glambda}
g_\lambda = \suml_{i=1}^5 e^i\otimes e^i +
\frac43\lambda^2\suml_{i=6}^8 e^i\otimes e^i
\end{equation} is compatible with the structure $\cI_\lambda.$
\end{proposition}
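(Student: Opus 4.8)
The plan is to verify directly that $g_\lambda$ is Hermitian with respect to each endomorphism $I_i \in \cI_\lambda$, i.e. that
\[
g_\lambda(I_i X, I_i Y) = g_\lambda(X, Y) \qquad (i=1,2,3)
\]
for all $X, Y \in \su(3)$. Since the $I_i$ satisfy the quaternion relations by Proposition~\ref{onepara}, compatibility for a single $I_i$ together with $g_\lambda$-orthogonality considerations will propagate, but to be safe I would check all three — or more efficiently, check the $S^2H$-equivariance built into the construction. Because $g_\lambda$ is manifestly diagonal in the basis $\{e^i\}$, the cleanest route is to evaluate $I_i \cdot e_k$ explicitly for each basis vector using the action formula from Proposition~\ref{onepara} with the values $\lambda_1 = \tfrac12$, $\lambda_3 = -\tfrac{3}{4}\lambda^{-1}$, $\lambda_4 = -\tfrac12$, and $\lambda_2 = \lambda$, and then confirm that the resulting matrix of $I_i$ is $g_\lambda$-orthogonal.

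\emph{First} I would split the tangent space as $\su(3) = \ss \oplus \pp$ and track how each $I_i$ maps these summands. The key structural point is that the anti-commutator terms $\phi_2$ and $\phi_3$ exchange $\ss$ and $\pp$, while the commutator terms $\phi_1$ and $\phi_4$ preserve each. \emph{Next}, when acting on $\ss$ (spanned by $e_6, e_7, e_8$), the map $I_i \cdot B = \tfrac12[A,B] + i\lambda_2\big(\{A,B\} - \tfrac23\tr(AB)\mathbf1\big)$ produces an $\ss$-component scaled by $\lambda_1 = \tfrac12$ and a $\pp$-component scaled by $\lambda_2 = \lambda$; acting on $\pp$ it produces a $\pp$-component scaled by $\lambda_4 = -\tfrac12$ and an $\ss$-component scaled by $\lambda_3 = -\tfrac34\lambda^{-1}$. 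The factor $\tfrac43\lambda^2$ attached to $\ss$ in $g_\lambda$ is precisely what is needed to balance the asymmetric coupling constants $\lambda_2$ and $\lambda_3$ between the two subspaces, and I expect the identity to come down to the arithmetic relation $\tfrac43\lambda^2 \cdot (\tfrac34\lambda^{-1})^2 = \lambda^2$ matching against the $\pp$-to-$\ss$ transfer.

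\emph{The main obstacle} will be bookkeeping the cross-terms: for a fixed $I_i$ one must show not merely that lengths are preserved but that the full bilinear form $g_\lambda(I_iX, I_iY)$ agrees with $g_\lambda(X,Y)$ including all off-diagonal pairings between $\ss$ and $\pp$ images. Because the anti-commutator $\phi_2$ maps a vector in $\ss$ into $\pp$ (where the metric weight differs by $\tfrac43\lambda^2$), the verification is most delicate precisely on those mixed pairs. I would organize this by computing the $8\times 8$ matrix of each $I_i$ in the basis $\{e_k\}$ — using the explicit brackets and anti-commutators tabulated in Section~4 — and then checking that $I_i^t \, G_\lambda \, I_i = G_\lambda$, where $G_\lambda = \mathrm{diag}(1,1,1,1,1,\tfrac43\lambda^2,\tfrac43\lambda^2,\tfrac43\lambda^2)$ is the Gram matrix of $g_\lambda$. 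Given the block structure this reduces to a small number of scalar identities in $\lambda$, all of which should collapse to the relations \eqref{quat}; the computation is routine but error-prone, and is exactly the sort of step that was checked with \emph{Mathematica} as noted in the introduction.
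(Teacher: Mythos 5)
Your proposal is correct as a verification procedure, but it is essentially the brute-force half of the argument, where the paper's proof rests on a representation-theoretic shortcut that you only gesture at ("check the $S^2H$-equivariance built into the construction") and never actually use. The paper observes that $\ss\simeq S^2H$ and $\pp\simeq S^4H$ are irreducible, inequivalent $\SO(3)$-modules, so by Schur's lemma the $\SO(3)$-invariant inner products on $\su(3)$ form exactly the diagonal two-parameter family $a\suml_{i=1}^5 e^i\otimes e^i+b\suml_{i=6}^8 e^i\otimes e^i$; since $\cI_\lambda$ is $\SO(3)$-equivariant, averaging such a metric over the quaternionic action produces another invariant (hence diagonal) metric that is compatible by construction, so compatibility \emph{must} hold for some ratio $b/a$, and a single norm computation then identifies $b/a=\frac43\lambda^2$. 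Your plan --- compute the $8\times8$ matrices of the $I_i$ and check $I_i^{t}G_\lambda I_i=G_\lambda$ --- proves the same statement, but at the cost of verifying all the mixed $\ss$--$\pp$ pairings, which is precisely the bookkeeping that the invariance argument renders unnecessary; what each approach buys is clear: yours is elementary and self-contained, the paper's explains \emph{why} the rescaling has to exist before any computation is done. Two slips to fix if you carry your version out. First, compatibility with a single $I_i$ does not propagate; what is true is that compatibility with two of them gives the third for free, since $I_3=I_1 I_2$ implies $g(I_3X,I_3Y)=g(I_1I_2X,I_1I_2Y)=g(I_2X,I_2Y)=g(X,Y)$. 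Second, your predicted arithmetic collapse $\frac43\lambda^2\cdot\bigl(\frac34\lambda^{-1}\bigr)^2=\lambda^2$ is false: the left-hand side equals $\frac34$, independent of $\lambda$. The identities that actually close the check combine the commutator and anticommutator contributions, e.g.\ on $\pp$ (take $X=e_1$, using \eqref{I18}) one gets $\bigl(\frac12\bigr)^2+\bigl(\frac34\lambda^{-1}\bigr)^2\cdot\frac43\lambda^2=\frac14+\frac34=1$, and on $\ss$ (take $X=e_7$) one gets $\bigl(\frac12\bigr)^2\cdot\frac43\lambda^2+\lambda^2=\frac43\lambda^2$; neither the diagonal nor the cross term preserves the metric alone, so the check cannot be reduced to a single transfer identity between the two subspaces.
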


\begin{proof}
If $\{I_i\}$ are the endomorphisms defined in
  \eqref{ident}, with the quaternionic action defined in Proposition
 \ref{onepara} we need to show that
\[ g_\lambda(I_i\cdot X,I_i\cdot Y)=g_\lambda(X,Y),\qquad i=1,2,3.\]
Since $\cI_\lambda$ is $\SO(3)$-invariant, and both subpsaces
$\ss$, $\pp$ are irreducible, this equation \emph{must} hold for
some choice of $\lambda$. The rest is a computation.\end{proof}

\section{The main result}

The formula for $A\cdot X$ in the proof of Proposition~\ref{onepara}
can be used to compute the endomorphisms $I_i$ explicitly. For
example, the action of $I_3$ on $\su(3)$ is given by
\begin{equation}\label{I18}
\left\{\begin{array}{rcl}
e_1 &\mapsto& \frac12e_3+\frac34\lambda^{-1}e_6\\[3pt]
e_2 &\mapsto& -\frac12\sq e_3+\frac14\sq\lambda^{-1}e_6\\[3pt]
e_3 &\mapsto& -\frac12e_1+\frac12\sq e_2\\[3pt]
e_4 &\mapsto& \frac12e_5-\frac34\lambda^{-1}e_8\\[3pt]
e_5 &\mapsto& -\frac12e_4+\frac34\lambda^{-1}e_7\\[3pt]
e_6 &\mapsto& -\lambda e_1-\frac1\sq\lambda e_2\\[3pt]
e_7 &\mapsto& -\frac12e_8-\lambda e_5\\[3pt]
e_8 &\mapsto& \frac12e_7+\lambda e_4.
\end{array}\right.
\end{equation}

We can use these formulas, and analogous ones for $I_1,I_2$ to prove

\begin{proposition}\label{ooo}
 A set of $2$-forms $\{\omega_i\}$ associated to the
  \aqH\ manifold $\{\SU(3),g_\lambda,\cI_\lambda\}$ is given by
\[\begin{array}{l}\textstyle
\omega_1 = \frac12(e^{15}+\sq e^{25}+e^{34}) + \lambda(\frac1\sq
e^{28}-e^{46}+e^{37}-e^{18}) -\frac23\lambda^2e^{67},\\[9pt]
\omega_2=-e^{14}-\frac12e^{35}+\lambda(\frac2\sq
e^{27}-e^{38}-e^{56})
-\frac23\lambda^2e^{68},\\[9pt]
\omega_3 = \frac12(e^{13}-\sq e^{23}+e^{45}) + \lambda(\frac1\sq
e^{26}-e^{48}+e^{57}+e^{16}) -\frac23\lambda^2e^{78}.
\end{array}\]
\end{proposition}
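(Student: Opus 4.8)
The plan is to compute each $\omega_i$ directly from its defining relation $\omega_i(X,Y)=g_\lambda(I_i\cdot X,Y)$ and to read off its components in the left-invariant coframe $\{e^i\}$. Writing $\omega_i=\suml_{j<k}\omega_i(e_j,e_k)\,e^{jk}$, the whole proposition reduces to evaluating the scalars $g_\lambda(I_i\cdot e_j,e_k)$ over all basis pairs and collecting the nonzero ones.

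First I would tabulate the action of each $I_i$ on $\{e_1,\dots,e_8\}$. For $I_3=e_6$ this is precisely \eqref{I18}; for $I_1=e_8$ and $I_2=-e_7$ the analogous tables follow by substituting $A=e_8$ and $A=-e_7$ into the master formula for $A\cdot X$ in the proof of Proposition~\ref{onepara}, with the $\lambda_k$ fixed by \eqref{quat}. This is a mechanical evaluation using the listed structure constants of $\su(3)$ together with the commutator, anti-commutator, and trace operations defining $\phi_1,\dots,\phi_4$.

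Next I would pair these images against the coframe via the diagonal metric \eqref{glambda}. The one point requiring care is that $g_\lambda$ is the identity on $\pp=\langle e_1,\dots,e_5\rangle$ but carries the factor $\tfrac43\lambda^2$ on $\ss=\langle e_6,e_7,e_8\rangle$. Thus a $\pp$-valued term of $I_i\cdot e_j$ paired with a $\pp$-basis vector produces a $\lambda$-independent coefficient (the $\tfrac12$-terms such as $e^{13},e^{34},e^{45}$); a $\lambda^{-1}$-coefficient on an $\ss$-valued term, once rescaled by $\tfrac43\lambda^2$, produces the linear $\lambda$-terms; and the pairing of two $\ss$-vectors yields the quadratic $-\tfrac23\lambda^2$-terms $e^{67},e^{68},e^{78}$. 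Assembling the surviving entries reproduces the three displayed expressions.

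A useful simplification is that antisymmetry of each $\omega_i$ need not be verified entry by entry: it is equivalent to $I_i$ being skew-adjoint for $g_\lambda$, which is immediate from the compatibility relation $g_\lambda(I_iX,I_iY)=g_\lambda(X,Y)$ of the preceding Proposition together with $I_i^2=-\mathbf1$ from \eqref{IIX}. It therefore suffices to compute the entries with $j<k$. I expect no conceptual obstacle; the only real cost is the sheer volume of arithmetic and the consistent tracking of the $\tfrac43\lambda^2$ factor, and this is precisely where the \emph{Mathematica} assistance mentioned in the introduction is used.
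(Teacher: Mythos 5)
Your proposal is correct and follows essentially the same route as the paper: the paper's proof also evaluates $\omega_i(e_j,e_k)=g_\lambda(I_ie_j,e_k)$ from the explicit endomorphism tables (giving the sample computation $\omega_3(e_1,e_6)=\frac34\lambda^{-1}g_\lambda(e_6,e_6)=\lambda$ and omitting the analogous work for $I_1,I_2$). Your systematic bookkeeping of the $\tfrac43\lambda^2$ factor and the skew-adjointness shortcut for antisymmetry are sound refinements of the same computation.
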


\begin{proof} The expression of $\omega_3$ follows easily from \eqref{I18}. For
example, using \eqref{glambda},
\[\textstyle \omega_3(e_1,e_6)=g_\lambda(I_3e_1,e_6)=
\frac34\lambda^{-1}g_\lambda(e_6,e_6)=\lambda,\] explaining the
coefficient of $e^{16}$. Minus the same coefficient is visible in the
expression for $I_3e_6$, which is consistent.

The expressions for $\omega_1,\omega_2$ follow in a similar way
from the computation of $I_1,I_2$ that we omit.
\end{proof}

We are now in a position to verify if and when the ideal condition
\eqref{ideal} holds. Since $\bbb=(\bbb_i^j)$ is a matrix of 1-forms,
we first impose the condition that its trace vanishes. By
Lemma~\ref{tr}, this amounts to assuming that the intrinsic torsion
lies in $\cW_1$.  Then $\bbb$ takes values in
\begin{equation}\label{sl3R}
\mathfrak{sl}(3,\R)=\ss\oplus i\pp,
\end{equation}
where $i\pp$ denotes the 5-dimensional space of real symmetric
trace-free matrices. The decomposition \eqref{sl3R} is merely the
Cartan dual of \eqref{su3} in the theory of symmetric spaces.

Once we express the 1-forms $\bbb_i^j$ in terms of the basis dual to
\eqref{choice}, we can regard $\xi\mapsto\bbb(\xi)$ as a linear
mapping from $\su(3)$ to \eqref{sl3R}.  It is natural to suppose that
the restriction of this mapping to each of $\ss$ and $\pp$ separately
is a \emph{multiple of the identity}. We therefore suppose that
\begin{equation}\label{natural}
\bbb\colon e_i\mapsto a E_i^a + s E_i^s,
\end{equation}
where $E_i$ is the matrix associated to $e_i$ (for example,
$E_1=E_{11}-E_{33}$), and (with some abuse of notation) the
coefficients $a,s$ are to be determined. More explicitly,
\begin{equation}\label{bbb}
\bbb=\hbox{\large$\left(\bbb_i^j\right)$} =
\left(\begin{array}{ccc} s(e^1-\frac1\sq e^2) & s e^3+ae^6 &
  se^4+ae^7\\[9pt] se^3-ae^6 & \frac2\sq se^2 & se^5+ae^8 \\[9pt]
se^4-ae^7 & se^5-ae^8 & -s(e^1+\frac1\sq e^2)
\end{array}\right).
\end{equation}
By construction, $\tr(\beta)=0$.

With this set-up, we can state

\begin{theorem}\label{main}
 The compact \aqH\ manifold $\{\SU(3),g_\lambda,\cI_\lambda\}$
 satisfies the ideal condition {\rm(2)} of Theorem~\ref{qK} if and
 only if $\lambda=\pm\sqrt{\frac3{20}}$. The resulting structure is
 invariant by the action of $\SU(3)$ on the left and $\SO(3)$ on the
 right, and its intrinsic torsion is $\SO(3)$-invariant.
\end{theorem}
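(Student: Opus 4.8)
The plan is to verify the ideal condition \eqref{ideal} by direct computation with the explicit data of Proposition~\ref{ooo} and the matrix $\beta$ of \eqref{bbb}, treating $a$, $s$ and $\lambda$ as the unknowns. First I would justify that the two-parameter ansatz \eqref{natural}--\eqref{bbb} is not merely natural but forced once we ask for an $\SO(3)$-invariant torsion: an $\SO(3)$-equivariant trace-free map $\beta\colon\su(3)\to\mathfrak{sl}(3,\R)=\ss\oplus i\pp$ decomposes, by Schur's lemma applied to $\ss\simeq S^2H$ and $\pp\simeq S^4H$, into a scalar $a$ on the $\ss$-summand and a scalar $s$ on the $\pp$-summand, with no cross terms; this is precisely \eqref{bbb}. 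Moreover, since the whole system is $\SO(3)$-equivariant (the $\omega_i$ transform in the standard representation and $\beta$ transforms as the connection \eqref{connectionalpha}, whose inhomogeneous term $A^{-1}dA$ vanishes for the constant right action), if \emph{any} $\beta$ solves \eqref{ideal} then its average over $\SO(3)$ against Haar measure is again a solution and is $\SO(3)$-invariant; hence solvability of \eqref{ideal} is equivalent to solvability within the family \eqref{bbb}. This reduces the theorem to a finite computation.

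For the equivalence itself, I would compute the two sides of \eqref{ideal}. The left-hand sides $d\omega_i$ are obtained by differentiating the expressions in Proposition~\ref{ooo} using the structure equations for $de^1,\dots,de^8$; the right-hand sides $\sum_{j}\beta_i^j\wedge\omega_j$ are read off from \eqref{bbb} and Proposition~\ref{ooo}. Because the construction is $\SO(3)$-equivariant, the three equations indexed by $i=1,2,3$ are carried into one another by the right $\SO(3)$-action, so it suffices to treat a single index, most conveniently $i=3$, where the explicit action \eqref{I18} is available. Expanding $d\omega_3-\sum_j\beta_3^j\wedge\omega_j$ in the basis of left-invariant $3$-forms $e^{klm}$ yields a collection of scalar equations whose coefficients are polynomials of low degree in $a$, $s$ and $\lambda$; the vanishing of all of them is the ideal condition. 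The entire system is linear in $a$ and $s$ with $\lambda$-dependent coefficients and is overdetermined; solving for $a$ and $s$ from part of it and substituting into the remainder leaves a single scalar constraint on $\lambda$, which I expect to reduce to $\lambda^2=\tfrac{3}{20}$. Both signs $\lambda=\pm\sqrt{3/20}$ then give genuine solutions, establishing the ``if and only if''.

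I expect the main obstacle to be organising and carrying out this $3$-form computation cleanly: even after the reduction to $i=3$, expanding and collecting in $\Lambda^3\su(3)^*$ produces many terms, and the delicate point is that the system in $(a,s)$ is overdetermined, so the content of the theorem is precisely the compatibility condition $\lambda^2=3/20$ rather than a mere solution for $a$ and $s$. This is the step the paper attributes to \emph{Mathematica}.

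Finally I would dispatch the invariance claims, which are structural rather than computational. Left $\SU(3)$-invariance is immediate, since $g_\lambda$ in \eqref{glambda} and $\cI_\lambda$ are built entirely from the left-invariant coframe $\{e^i\}$ and the left-invariant fields $\{e_i\}$. For the right action of the subgroup $\SO(3)\subset\SU(3)$ with Lie algebra $\ss$, right translation acts on left-invariant tensors through $\mathrm{Ad}(\SO(3))$, which preserves the splitting $\su(3)=\ss\oplus\pp$ and the Killing form; hence it preserves $g_\lambda$, while $\cI_\lambda$ is $\SO(3)$-invariant by construction in Proposition~\ref{onepara}. Since the metric and the quaternionic structure are invariant under left $\SU(3)$ and right $\SO(3)$, so are the Levi-Civita connection, $\Omega$, and therefore the intrinsic torsion $\xi=\nabla\Omega$; evaluated at the identity this makes $\xi$ an $\mathrm{Ad}(\SO(3))$-invariant element of the torsion space. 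As $\tr(\beta)=0$ places $\xi$ in $\cW_1\simeq S^6H\oplus S^4H\oplus S^2H\oplus S^0H$ by Lemma~\ref{tr}, whose only trivial summand is $S^0H$, the torsion lies in $S^0H$, which is exactly the asserted $\SO(3)$-invariance and the ``nearly quaternionic'' property.
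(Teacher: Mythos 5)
Your proposal is correct, and its computational core coincides with the paper's own proof: substitute the $\omega_i$ of Proposition~\ref{ooo} and the two-parameter matrix \eqref{bbb} into \eqref{ideal}, expand in left-invariant $3$-forms, and solve the resulting overdetermined linear system in $(a,s)$; the paper, which likewise delegates the expansion to \emph{Mathematica}, records the intermediate necessary conditions $a=1+\frac{16}{3}\lambda^2$, $s=-2\lambda$, after which the surviving equations force $\lambda^2=\frac{3}{20}$. The genuine difference is the logical status you give to the ansatz \eqref{natural}--\eqref{bbb}. The paper introduces it merely as ``natural'' (trace-free, a multiple of the identity on each of $\ss$ and $\pp$), so taken literally its computation establishes the ``if'' direction but the ``only if'' direction only among $\bbb$ of this special form. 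Your Schur-plus-averaging argument closes this gap: \eqref{ideal} is an affine condition on $\bbb$, and right translation by $g$ in the subgroup \eqref{SO3} carries solutions to solutions after conjugation by the constant rotation matrix $A(g)$ (constancy being exactly why the inhomogeneous term of \eqref{connectionalpha} drops out), so Haar averaging produces an equivariant solution, which Schur's lemma applied to $\su(3)\simeq S^2H\oplus S^4H$ and $\mathfrak{gl}(3,\R)\simeq S^0H\oplus S^2H\oplus S^4H$ identifies with the family \eqref{bbb}, trace-freeness coming for free since $\su(3)$ has no trivial summand. One step needs tightening: averaging over right $\SO(3)$ alone yields a $\beta$ that is equivariant but whose coefficients may still be non-constant functions on $\SU(3)$; you must first (or also) average over left $\SU(3)$-translations --- legitimate for the same affine reason, since the $\omega_i$ are left-invariant --- to land in the constant-coefficient family before invoking Schur. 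Your further reduction to the single index $i=3$ is valid: for equivariant $\bbb$ the defect forms $D_i=d\omega_i-\sum_j\bbb_i^j\wedge\omega_j$ transform in the vector representation of $\SO(3)$, so $D_3=0$ forces all $D_i=0$. The invariance claims you handle essentially as the paper does, except that you place $\xi$ in $S^0H$ by noting it is an invariant element of $\cW_1$ whose unique trivial summand is $S^0H$, while the paper argues that $\xi$ is determined by the equivariant map \eqref{natural}; the two arguments are equivalent. In sum, the paper's route is shorter, while yours (with the left-averaging supplement) upgrades the theorem's ``only if'' to a complete proof and cuts the machine computation by a factor of three.
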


\begin{proof} This is a direct computation. Solving the equations
\eqref{ideal} with the $\omega_i$ as in Proposition~\ref{ooo} and the
$\bbb_i^j$ as in \eqref{bbb}, we first find that a necessary condition
is \[\textstyle
 a= 1+\frac{16}3\lambda^2,\qquad s=-2\lambda.\]
Once these values are assigned, the remaining equations are satisfied
by taking $\lambda^2=3/20$.

By construction, the forms $e^i$ are left invariant, so all the
structures considered in this paper are invariant by left
translation. Right translation by $g\in \SU(3)$ can then be
identified with action of $\mathrm{Ad}(g)$ on the Lie algebra
$\su(3)$. In our case, we are
  free to take $g\in \SO(3)$, as defined in \eqref{SO3}.

We already know that the intrinsic torsion belongs to $\cW_1$.
The fact that it belongs to the 1-dimensionsinal subspace $S^0H$
follows because the intrinsic torsion is completely determined by
the map \eqref{natural} that is itself $\SO(3)$-equivariant.
\end{proof}

\smallbreak\noindent{\it Remarks. 1.} The two choices of sign for
$\lambda$ give a different quaternionic action and ideal structure
for the same metric $g_\lambda$.  Since $(\bbb^i_j)$ is not
anti-symmetric, the resulting structure on $\SU(3)$ is not
quaternionic K\"ahler. Note that $\SU(3)$ cannot in any case admit an
\aqH\ structure with $d\Omega=0$ since otherwise $[\Omega]$ would be a
non-zero element in cohomology, but $b_4(\SU(3))=0$.

{\it2.} The matrix $B$ of curvature 2-forms defined by \eqref{Bij}
with $\bbb$ in \eqref{bbb} will reflect the overall $\SO(3)$
invariance. One finds that, if the expression for $\omega_3$ in
Proposition~\ref{ooo} is written as $\tau_0+\lambda\tau_1
+\lambda^2\tau_2$, then
\[ B_2^1-B_1^2= -4(a+s^2)\tau_0 - 3a(a-1)\tau_2.\]
The symmetric coefficients are a bit more complicated, but the
diagonal ones are given by
\[ B^1_1 = c(e^{36} + e^{47}),\quad B^2_2=c(e^{58}-e^{36}),\quad
B^3_3 = -c(e^{47}+e^{58})\] where $c=2(a-1)s$. Assigning values to the
constants as in the proof of Theorem~\ref{main} does not eliminate any
terms.

\smallbreak

We conclude with some observations concerning \emph{integrable}
quaternionic structures. The Lie group $\SU(3)$ was shown by
Spindel, Servin, Troost \& Van Proeyen \cite{Spindel} to admit a
hypercomplex structure. In the treatment of Joyce \cite{Joyce}
this hypercomplex structure arises from a 3-dimensional subalgebra
$\su(2)$ inequivalent to $\ss$. This provides $\SU(3)$ with \aqH\
structures with intrinsic torsion in $\cW_3\oplus\cW_4$, but not
directly related to our construction. In our case, $\SU(3)$ cannot
admit an $\SO(3)$-invariant quaternionic structure. Such a
structure would necessarily have torsion in $\cW_3$ and satisfy
\begin{equation}\label{Omegao}
d\Omega\wedge\omega_i=0,\quad i=1,2,3,
\end{equation}
equations that can never be compatible with \eqref{glambda} if
$\lambda\in\R$.

Our computations can however be performed equally for the Lie
group $\SL(3,\R)$; it suffices to repeat everything with complex
coefficients. For this group, the situation is reversed; it turns
out that $\SL(3,\R)$ does not admit a structure of the type
described in Theorem~\ref{main}, but \eqref{Omegao} can be solved
when the analogue of the parameter $\lambda$ takes on the values
$\pm\frac12$. In this way, $\SL(3,\R)$ becomes a quaternionic
manifold, and admits a compatible Hermitian structure with torsion
in $\cW_3$.

\medbreak\small

\section*{Acknowledgements}

The author would like to thank Simon Salamon and Simon Chiossi for
enlightening discussions and useful suggestions, and the
Department of Mathematics at the Politecnico di Torino for its
hospitality during the preparation of this work. The latter was
supported by the Spanish Ministry of Science and Education (MEC)
and by the Spanish Foundation for Science and Technology (FECYT)
through a postdoctoral fellowship and research contract associated
with the project --2007-0857.

\parindent0pt

\end{document}